\newtheoremstyle{dotless}{}{}{\itshape}{}{\bfseries}{}{ }{} 
\theoremstyle{dotless}
\newtheorem{theorem}{Theorem}[section]
\newtheorem{lemma}[theorem]{Lemma}
\newtheorem{prop}[theorem]{Proposition}
\newtheorem{conj}[theorem]{Conjecture}
\newtheorem{defn}[theorem]{Definition}
\newtheorem{remark}[theorem]{Remark}
\newtheorem{exam}[theorem]{Example}
\begin{document} 

\pagenumbering{arabic} \setcounter{page}{1}

\title{On higher energy decompositions and the sum--product phenomenon }

\author{George Shakan}

 \thanks{The author is partially supported by NSF grant  DMS--1501982 and would
like to thank Kevin Ford for financial support. The author also thanks Kevin Ford and Oliver Roche--Newton for useful comments and suggestions, as well as the referee for a meticulous and timely reading and helpful suggestions.}
\address{Department of Mathematics \\
University of Illinois \\
Urbana, IL 61801, U.S.A.}
\email{shakan2@illinois.edu}

\begin{abstract}
Let $A \subset \mathbb{R}$ be finite. We quantitatively improve the Balog--Wooley decomposition, that is $A$ can be partitioned into sets $B$ and $C$ such that $$\max\{E^+(B) , E^{\times}(C)\} \lesssim |A|^{3 - 7/26}, \ \ \max \{E^+(B,A) , E^{\times}(C, A) \}\lesssim |A|^{3 - 1/4}.$$ We use similar decompositions to improve upon various sum--product estimates. For instance, we show
$$ |A+A| + |A A|  \gtrsim   |A|^{4/3 + 5/5277}.$$
\end{abstract}

\maketitle
\tableofcontents

\section{Introduction}

Let $A, B \subset \mathbb{R}$ be finite. We define the {\em sumset} and {\em product set} via $$A+B := \{a + b : a \in A , \ b \in B\} , \ \ \ \ \ \ AB := \{ab : a \in A , \ b \in B\}.$$ In this paper, we say $b \gtrsim a$ if $a = O(b \log^c |A|)$ for some $c >0$ and $a \sim b$ if $b \gtrsim a$ and $a \gtrsim b$.
Equipped with these definitions we are ready to state the Erd\H{o}s--Szemer\'{e}di sum--product conjecture.

\begin{conj}\label{esconj}\cite{ES} Fix $\delta \leq 1$. Then for any finite $A \subset \mathbb{Z}$, one has
 $$|A+A| + |A A| \gtrsim|A|^{1 + \delta}.$$
\end{conj}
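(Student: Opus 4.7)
The plan is to push the higher-energy decomposition framework at the core of this paper to its conjectured limit. First, observe that if one can extract a subset $B \subseteq A$ with $|B| \gtrsim |A|$ and $E^+(B) \lesssim |A|^2$, or a subset $C \subseteq A$ with $|C| \gtrsim |A|$ and $E^\times(C) \lesssim |A|^2$, then by Cauchy--Schwarz
$$|A+A| \;\geq\; |B+B| \;\geq\; \frac{|B|^4}{E^+(B)} \;\gtrsim\; |A|^2 \qquad \text{or} \qquad |AA| \;\geq\; \frac{|C|^4}{E^\times(C)} \;\gtrsim\; |A|^2,$$
which is precisely the statement of Conjecture~\ref{esconj} at $\delta = 1$. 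In this way the entire problem reduces to producing a decomposition at the optimal energy threshold.

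The second step --- the heart of the problem --- would be to establish such a subset via an iterative peeling procedure in the spirit of Balog--Wooley: alternately remove the vertices of $A$ carrying the heaviest additive quadruples $(a,b,c,d)$ with $a+b = c+d$ and the heaviest multiplicative quadruples $(a,b,c,d)$ with $ab=cd$, showing at each round that either $E^+$ or $E^\times$ of the surviving set drops by a polynomial factor while the set itself loses at most a polylogarithmic factor. Auxiliary tools would include Katz--Koester style inequalities bounding higher energies by $E^+$ and $E^\times$, Pl\"unnecke--Ruzsa on the peeled sets, Solymosi's inequality $|A+A|^2 |AA| \gtrsim |A|^4 / \log |A|$, and Rudnev's point--plane incidence bound to handle the intermediate regime where both energies persist at a common scale $|A|^{2+\eta}$.

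The main obstacle --- and the reason the conjecture has stood open since 1983 --- is quantifying the gain at each peeling round. The Balog--Wooley-type strategy currently reaches only $|A|^{3 - 7/26}$ (improved in the present paper), which is polynomially far from the $|A|^2$ target required for $\delta = 1$. The ceiling arises because every known energy decomposition feeds on incidence geometry in $\mathbb{R}^2$, and the fixed Szemer\'edi--Trotter / Rudnev exponents propagate through any such scheme to produce a hard arithmetic barrier. A proof of the full conjecture will therefore almost certainly require a genuinely new structural input --- for instance, an approximate-group style dichotomy ruling out simultaneous additive and multiplicative structure in $\mathbb{R}$, or a sharper algebraic identity in the polynomial-method tradition --- lying strictly beyond the toolkit the present paper extends.
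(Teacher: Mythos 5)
There is nothing to verify here in the usual sense: the statement you were asked about is Conjecture~\ref{esconj}, the Erd\H{o}s--Szemer\'edi conjecture itself, which is open; the paper does not prove it (its contribution is the much weaker exponent $4/3+5/5277$ in Theorem~\ref{sumprod}), and your proposal does not prove it either. You say so yourself in your last paragraph, so the entire second step of your plan --- the ``iterative peeling'' that is supposed to reach the energy threshold $|A|^2$ --- is a declared gap, and the proposal is a research program rather than a proof.

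Beyond being incomplete, the specific reduction you chose is provably blocked. You want a subset $B\subseteq A$ (or $C\subseteq A$) of proportional size with $E^+(B)\lesssim |A|^2$ (or $E^\times(C)\lesssim |A|^2$), and then Cauchy--Schwarz. But the Balog--Wooley example \eqref{ex} in this paper, with $S=P^2$, defeats this: writing $|A|=SP=P^3$, any subset $C$ with $|C|\geq |A|/2$ has at least $S P^2/4$ pairs lying in a common geometric column, all of whose ratios are powers of $2$ in a set of size at most $2P$, so by Cauchy--Schwarz $E^\times(C)\gtrsim S^2P^3=|A|^{7/3}$; symmetrically, same-row pairs force $E^+(B)\gtrsim S^3P=|A|^{7/3}$ for any $B$ with $|B|\geq |A|/2$. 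Hence no large subset with energy $\lesssim |A|^2$ can exist in general, and indeed the paper notes right after Theorem~\ref{bw} that the energy decomposition cannot be pushed beyond $\delta=2/3$. This is exactly why the energy analogue is strictly weaker than the conjecture: in that example $|A+A|$ is close to $|A|^2$ even though every large subset has huge additive and multiplicative energy, so the sumset growth is invisible to second-moment (energy) bounds of subsets. Any genuine attack on $\delta=1$ must therefore go through quantities other than $E^+$ and $E^\times$ of large subsets --- which is part of the motivation for the third- and fourth-moment quantities $d^+$, $d^\times$, $d_4^+$ developed in this paper, and even those are only known to yield exponents slightly above $4/3$.
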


In the same paper, Erd\H{o}s and Szemer\'{e}di showed that Conjecture \ref{esconj} holds for some $\delta >0$, which began the history of the so called ``sum--product conjecture." Fourteen years passed until Nathanson \cite{Na} modified their proof and made the first quantitative estimate, showing Conjecture \ref{esconj} holds for $\delta = 1/31$. Ford \cite{Fo} quickly improved  Nathanson's argument to obtain $\delta = 1/15$ is admissible in Conjecture \ref{esconj}. Ford did not have this world record for long, as within months Elekes \cite{El} showed Conjecture \ref{esconj} holds for $\delta = 1/4$. Elekes' techniques were completely different, as he remarkably made use of the Szemer\'{e}di--Trotter theorem from incidence geometry. His work marks the beginning of modern progress towards resolving Conjecture \ref{esconj}.

Solymosi \cite{So2} showed $\delta = 3/11$ is admissible in Conjecture \ref{esconj}. Later, in \cite{So} he used elementary geometry in a clever way to improve this to $\delta = 1/3$. This remained the world record for six years, until Konyagin and Shkredov \cite{KS} combined Solymosi's argument with Shkredov's work in additive combinatorics \cite{Sh, Shadd} to increase Solymosi's exponent. In a more recent paper  \cite{KS2}, the same authors proved  $\delta = 4/3 + 5/9813$ is admissible in Conjecture \ref{esconj}. Rudnev, Shkredov and Stevens \cite{RSS} replaced a ``few sums many products" lemma used in \cite{KS2} to obtain the world record that Conjecture \ref{esconj} holds for $\delta = 4/3 + 1/1509$. We make further improvements to show the following.

\begin{theorem}\label{sumprod} Let $A \subset \mathbb{R}$ be finite. Then  $$|AA| + |A+A| \gtrsim |A|^{4/3 + 5/5277}.$$
\end{theorem}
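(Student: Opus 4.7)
The plan is to feed the improved Balog--Wooley decomposition (announced in the abstract) into a Szemer\'edi--Trotter based incidence argument in the style of Solymosi and Konyagin--Shkredov, then convert the resulting higher--energy estimate into a product--set bound via a ``few sums, many products'' lemma in the spirit of Rudnev--Shkredov--Stevens. At the outset I may assume that both $|A+A|$ and $|AA|$ are close to the conjectured threshold $|A|^{4/3}$, since otherwise the theorem is immediate.

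First I would apply the decomposition theorem to partition $A = B \sqcup C$ so that
\[
\max\{E^+(B), E^\times(C)\} \lesssim |A|^{3-7/26}, \qquad \max\{E^+(B,A), E^\times(C,A)\} \lesssim |A|^{3-1/4}.
\]
After swapping the roles of addition and multiplication if needed, assume $|B| \geq |A|/2$. The smallness of $E^+(B)$ and $E^+(B,A)$ lets me run the Solymosi--Konyagin--Shkredov argument using incidences between points of $(A+A) \times (A+A)$ and lines through the origin with slopes in $A/A$; combined with Cauchy--Schwarz and a dyadic decomposition of the additive representation function on $B$, this yields an upper bound on a higher multiplicative energy of $A$ in terms of a power of $|A+A|$.

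Next, I would feed the small multiplicative energy of $C$ into a few--sums--many--products inequality. In its Rudnev--Shkredov--Stevens form, this converts control on $E^\times(C)$, together with the higher--energy bound from the previous step, into an inequality of shape $|AA|^\alpha |A+A|^\beta \gtrsim |A|^\gamma$. Optimizing the dyadic thresholds, the partition $B \sqcup C$, and the precise choice of higher energy being bounded should then produce the exponent $4/3 + 5/5277$.

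The hardest step will be the final optimization. The numerical gain over the previous record $4/3 + 1/1509$ is modest, so each inequality in the chain must be applied in its strongest available form with essentially no logarithmic or polynomial slack. In particular, the Balog--Wooley improvement (from $3-1/3$ to $3-7/26$) must be propagated through Szemer\'edi--Trotter without waste, and the few--sums--many--products lemma must be used in an asymmetric form that simultaneously exploits the symmetric bound $|A|^{3-7/26}$ and the asymmetric bound $|A|^{3-1/4}$. The somewhat baroque denominator $5277$ is the signature of this tightly tuned multi--parameter bookkeeping.
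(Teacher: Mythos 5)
Your plan runs the logic in the wrong direction, and the central quantitative step cannot work as described. In the paper the improved energy decomposition (Theorem \ref{decomp}) is a \emph{consequence} of the main decomposition machinery and plays no role in the proof of Theorem \ref{sumprod}; it cannot serve as the engine for the sum--product bound. Any argument that starts from the global partition $A = B \sqcup C$ with $\max\{E^+(B),E^{\times}(C)\} \lesssim |A|^{3-7/26}$ and converts energy into sumset/product set size (via Cauchy--Schwarz, which is the only general bridge available) yields at best $|A+A|+|AA| \gtrsim |A|^{1+7/26}$, which is \emph{weaker} than Solymosi's exponent $4/3$, let alone $4/3+5/5277$. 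More fundamentally, the Konyagin--Shkredov refinement of Solymosi that you invoke does not take a global decomposition of $A$ as input: its dichotomy is local. One clusters the popular slope sets $S_t$ and shows, via \eqref{cluster}, that Solymosi's bound improves unless the individual fibers $A_{\lambda} = A \cap \lambda A$ (for $\lambda \in S_t$) carry a lot of additive structure --- and the partition $A = B\sqcup C$ gives no control whatsoever over any particular $A_{\lambda}$, since an $A_{\lambda}$ can meet $B$ and $C$ arbitrarily.

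What the paper actually does in the hard case is quantify the additive structure of each $A_{\lambda}$ through the fourth--order quantity $d_4^+$: Proposition \ref{first} bounds the collinearity count $\sigma(A_{\lambda_1},A_{\lambda_2},A_{\lambda_3})$ by $d_4^+$, the clustering then forces $d_4^+(A_{\lambda}) \gtrsim t r^{-24}$ for (essentially) all $\lambda \in S_t$, and Proposition \ref{second} --- a ``few sums, many products'' decomposition with \emph{both} halves of size $\geq |A|/2$, applied to each $A_{\lambda}$ separately --- converts this into $E^{\times}(A_{\lambda}) \lesssim t^2 r^{24}$, hence $|A_{\lambda}A_{\lambda}| \gtrsim t^2 r^{-24}$. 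The proof then needs the Katz--Koester inclusion $A_{\lambda}A_{\lambda} \subset AA \cap \lambda AA$, a popularity argument to pull $S_t$ back inside $A$, the bridge $d^+(A') \lesssim D^{\times}(A')$ from \eqref{key}, and finally $|A+A| \gtrsim |A|^{58/37} d^+(A')^{-21/37}$ from Theorem \ref{sum}, giving $|A|^{331} \lesssim r^{1512}|A+A|^{37}|AA|^{210}$ and the exponent $5/5277$. None of these steps is supplied or replaced by your sketch, and the ``optimization'' you defer to the end cannot recover them: the missing input is not bookkeeping but the fourth--moment quantities $d_4^+$ and the local (per--$A_{\lambda}$) decomposition of Proposition \ref{second}, which is where the new strength of the paper lies.
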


Thus we improve Solymosi's exponent by nearly $\frac{1}{1000}$. We remark that Theorem \ref{sumprod} also holds if one replaces the product set with the quotient set.

We now turn our attention to decomposition results, which is the main motivation for the current work. About 35 years after the original sum--product conjecture, Balog and Wooley \cite{BW} provided a new way of looking at the problem of intrinsic interest and applicable (see \cite{Ha} for the first application). To state their results, we recall some definitions. Again, let $A, B \subset \mathbb{R}$ be finite. We define two representation functions of $x \in \mathbb{R}$: $$r_{A-B}(x) = \#\{(a,b) \in A \times  B : x = a - b\} , \ \ \ \ \ r_{A/B}(x) =  \#\{(a,b) \in A \times  B : a = xb\} .$$ The {\em additive energy} and {\em multiplicative energy} of $A$ and $B$ are defined via 

$$E^{+}(A,B) = \sum_x r_{A-B}(x)^2 , \ \ \ \ \ \ \ \ E^{\times}(A,B) = \sum_x r_{A/B}(x)^2.$$ We set $E^{+}(A) = E^{+}(A,A) $ and $E^{\times}(A) = E^{\times}(A,A) $. Heuristically, $E^+(A)$ is large when $A$ has additive structure. This is seen more clearly by the relation $$E^+(A) = \#\{(a , b , c, ,d) \in A^4 : a +b = c+ d\}.$$

\begin{theorem}\cite{BW}\label{bw}
Let $A$ be a finite subset of the real numbers and $\delta = 2/33$. Then there exist $B,C$ that partition $A$ satisfying $${\rm max}\{ E^{+}(B) , E^{\times}(C) \} \lesssim |A|^{3-\delta}, \ \ \ \ {\rm max} \{ E^{+}(B,C) , E^{\times}(B,C) \}\lesssim |A|^{3-\delta/2}.$$
\end{theorem}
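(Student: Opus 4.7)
Write $n = |A|$. For each $a \in A$ set
\[
\sigma^+(a) := |\{(b,c,d) \in A^3 : a - b = c - d\}|, \qquad \sigma^{\times}(a) := |\{(b,c,d) \in A^3 : ab = cd\}|,
\]
so that $E^+(A) = \sum_a \sigma^+(a)$ and $E^\times(A) = \sum_a \sigma^\times(a)$, and for any $X \subset A$ one has $E^+(X) \leq \sum_{x \in X} \sigma^+(x)$, with the analogous bound for $E^\times$. The plan is to place the \emph{additively rich} elements of $A$ into $C$, so that $B := A \setminus C$ has uniformly small $\sigma^+$, and then argue that the multiplicative energy on $C$ is controlled via a sum--product trade-off.

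Fix a threshold $\tau$ to be chosen at the end, and set $C := \{a \in A : \sigma^+(a) \geq \tau\}$ and $B := A \setminus C$. The design of $B$ immediately gives
\[
E^+(B) \leq \sum_{b \in B} \sigma^+(b) \leq n \tau.
\]
The technical core is to bound $E^\times(C)$ by a quantity of the form $n^{\alpha} / \tau^{\beta}$. The mechanism I would pursue is a two-step Szemer\'{e}di--Trotter argument: first, dyadically pigeonhole both $C$ and $A$ on the levels of $\sigma^+$ and $\sigma^\times$ to obtain refined sets on which both energies are uniform; second, on such a refined set $C' \subset C$, exploit that each $a \in C'$ witnesses many collinear triples in the grid $A \times A$ via its additive relation, while large $E^\times(C')$ produces many collinear triples via the multiplicative relation in the same grid, after which an incidence count yields a polynomial saving. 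Solymosi's estimate $|A+A|^2|AA| \gtrsim n^4$ enters naturally, coupled with the Cauchy--Schwarz bounds $|A+A| \geq n^4/E^+(A)$ and $|AA| \geq n^4/E^\times(A)$.

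Balancing $n\tau$ against $n^\alpha/\tau^\beta$ gives the balanced threshold $\tau = n^{(\alpha-1)/(\beta+1)}$ and a common bound of shape $n^{3-\delta}$; the specific value $\delta = 2/33$ should emerge from the exponents produced by the Szemer\'{e}di--Trotter step. For the mixed-energy conclusion, apply the standard Cauchy--Schwarz inequality $E^+(B,C)^2 \leq E^+(B)\, E^+(C)$ (via Parseval on $|\widehat{\mathbf{1}_B}|^2 |\widehat{\mathbf{1}_C}|^2$) together with the trivial bound $E^+(C) \leq |C|^3 \leq n^3$ to obtain $E^+(B,C) \lesssim n^{3-\delta/2}$; the multiplicative case is identical.

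The principal obstacle is the $E^\times$-bound on $C$: optimizing the incidence-theoretic trade-off to recover precisely $\delta = 2/33$ requires careful bookkeeping through the two-parameter dyadic decomposition and perhaps an iteration that peels off very bad residual subsets. A single direct application of Szemer\'{e}di--Trotter, without this careful dyadic pigeonholing, almost certainly loses at this step and produces a weaker $\delta$.
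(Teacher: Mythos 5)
There is a genuine gap, and it sits exactly where you flag it. First, note that the paper does not prove Theorem \ref{bw} at all: it is quoted from Balog--Wooley \cite{BW}, whose argument (as the paper summarizes) iteratively extracts, from any set with large additive energy, a \emph{large subset} with small multiplicative energy (via Balog--Szemer\'edi--Gowers plus Solymosi), and then reassembles the extracted pieces with a union lemma for multiplicative energy; the paper's own contribution is a different and stronger decomposition (Theorems \ref{main} and \ref{decomp}) built on $d^+$, $D^{\times}$ and Szemer\'edi--Trotter. Your proposal replaces all of this with a single threshold cut on the local energy $\sigma^+(a)$ and then asserts, without proof, the estimate $E^{\times}(C)\lesssim n^{\alpha}\tau^{-\beta}$ for the set $C$ of additively rich elements. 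That estimate is the entire content of the theorem, and nothing in the sketch supplies it: a direct incidence count does not convert ``each $a\in C$ lies in $\geq\tau$ additive quadruples of $A$'' into an upper bound on the multiplicative energy of \emph{all} of $C$. Every known mechanism here (BSG$+$Solymosi in \cite{BW}, or the $D^{\times}$/Szemer\'edi--Trotter route of \cite{KS2,RSS} and of this paper's Lemma \ref{lem1}) only produces a large \emph{subset} with small multiplicative structure, which is why all existing proofs are iterative and need a sub-additivity lemma for energy under unions (compare Lemma \ref{lem2} and Lemma \ref{lem3}) to control the final piece; your closing remark that one ``perhaps'' needs an iteration does not identify this structure, and without a union lemma the iteration cannot be closed.

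There is also a structural worry with the threshold rule itself: $\sigma^+(a)$ is computed relative to all of $A$, so an element can be additively rich only through interactions with the additively structured part of $A$ while still carrying multiplicative relations inside $C$; the rule can therefore sweep multiplicatively structured elements into $C$, and you have no mechanism to bound their contribution to $E^{\times}(C)$. The parts of your write-up that do work are routine: $E^+(B)\leq n\tau$ by construction, and the mixed bounds follow from the diagonal ones by Cauchy--Schwarz, since $E^+(B,C)\leq E^+(B)^{1/2}E^+(C)^{1/2}\leq (n^{3-\delta})^{1/2}(n^{3})^{1/2}=n^{3-\delta/2}$ and likewise for $E^{\times}(B,C)$. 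But the heart of the theorem --- producing any polynomial saving $\delta>0$, let alone $\delta=2/33$, for the multiplicative energy of the additively rich part --- is missing, so the proposal does not constitute a proof.
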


Thus any set may be decomposed into two sets, one with little additive structure and one with little multiplicative structure. Note that Theorem \ref{bw} with exponent $\delta$ implies Conjecture \ref{esconj} with exponent $\delta$, via Cauchy--Schwarz:

$$|A|^2 |B|^2 \leq |A+B| E^{+}(A, B) , \ \ \ \ \ |A|^2 |B|^2 \leq |AB| E^{\times}(A, B) .$$ This is the so called ``energy analog" of the sum--product problem. In the same paper Balog and Wooley provided the example \begin{equation}\label{ex} \{ (2m-1) 2^j : 1 \leq m \leq S, 1 \leq j \leq P\},\end{equation} which shows, when $S=P^2$, it is not possible to improve Theorem \ref{bw} beyond $\delta = 2/3$. 

Balog and Wooley use an iterative argument to combine two key lemmas and prove Theorem \ref{bw}.  The first is a rather easy lemma concerning how the multiplicative energy behaves with respect to unions. The second is at the heart of the proof, which says if the additive energy is large, then there is a large subset that has small multiplicative energy. To accomplish this, they utilized Solymosi's \cite{So} sum--product result as well as the Balog--Szemer\'{e}di--Gowers theorem from additive combinatorics. Konyagin and Shkredov \cite{KS2} replaced this lemma with a completely different lemma of their own that allowed them to show $\delta = 1/5$ is admissible in Theorem \ref{bw}. This lemma is what inspired the current work. Finally Rudnev, Shkredov, and Stevens \cite{RSS} improved this to $\delta = 1/4$, which is the energy analog of Elekes' result towards Conjecture \ref{esconj}. We improve this to $\delta = 7/26$ below. To fully state our contribution, we require a few more definitions.

We now introduce the third order energies of a set: $$ E_3^{+}(A,B) = \sum_x r_{A-B}(x)^3 , \ \ \ \ \ \ \ \ E_3^{\times}(A,B) = \sum_x r_{A/B}(x)^3.$$ We set $E_3^{+}(A) = E_3^{+}(A,A) $ and $E_3^{\times}(A) = E_3^{\times}(A,A) $.

We first provide motivation for working with higher moments. It starts with the Szemer\'edi--Trotter theorem, which has played a pivotal role in the sum--product problem since \cite{El} (see chapter 8 of \cite{TV}).

\begin{theorem}\label{ST}[Szemer\'edi--Trotter] Let $P$ be a finite set of points and $L$ be a finite set of lines. Then the number of incidences between $P$ and $L$ is bounded from above: $$\#\{(p,\ell) \in P \times L : p \in \ell\} \leq 4 |P|^{2/3} |L|^{2/3} + 4 |P| + |L|.$$
\end{theorem}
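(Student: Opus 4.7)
The plan is to derive Szemer\'edi--Trotter from the crossing number inequality for simple graphs drawn in the plane. Recall the crossing lemma: any simple graph $G=(V,E)$ drawn in $\R^2$ satisfies $\operatorname{cr}(G)\geq |E|^3/(64|V|^2)$ whenever $|E|\geq 4|V|$. This is in turn a consequence of the weaker Euler-formula bound $\operatorname{cr}(G)\geq |E|-3|V|$ combined with a random sampling argument (keep each vertex independently with probability $p$ and optimize in $p$).

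First I would build an auxiliary plane drawing directly from the configuration $(P,L)$. Set $V=P$, and for each line $\ell\in L$ with $k_\ell:=|\ell\cap P|\geq 2$, order the points of $\ell\cap P$ along $\ell$ and insert the $k_\ell-1$ consecutive segments as edges drawn along $\ell$. Writing $I=I(P,L)$ for the number of incidences, the edge count satisfies
\[ |E|=\sum_{\ell:\,k_\ell\geq 2}(k_\ell-1)\;\geq\; I-|L|. \]
Because two distinct lines meet in at most one point, this drawing has at most $\binom{|L|}{2}\leq |L|^2/2$ crossings.

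Next I would split on the hypothesis of the crossing lemma. If $|E|<4|V|=4|P|$, then immediately $I\leq |E|+|L|\leq 4|P|+|L|$, which is absorbed by the last two terms of the claimed bound. Otherwise the crossing lemma applies and gives
\[ \frac{|E|^3}{64|P|^2}\;\leq\;\operatorname{cr}(G)\;\leq\;\frac{|L|^2}{2}, \]
so $|E|\leq 32^{1/3}|P|^{2/3}|L|^{2/3}<4|P|^{2/3}|L|^{2/3}$. Adding back $|L|$ to pass from $|E|$ to $I$ yields the stated inequality.

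The main technical obstacle is the crossing lemma itself, since the elementary Euler bound $\operatorname{cr}(G)\geq |E|-3|V|$ only gives the linear regime and is not by itself strong enough to produce the $|P|^{2/3}|L|^{2/3}$ shape. The probabilistic deletion step which upgrades Euler to the cubic bound is short, but must be carried out carefully to pin down the constant $1/64$, which is what propagates to the constant $4$ in Theorem \ref{ST}; a cruder crossing constant would still produce the correct exponent $2/3$ but a worse leading factor. An alternative route, which avoids the probabilistic step entirely, is the Clarkson--Shor cell-decomposition argument: cut $\R^2$ into $O(r)$ cells using a random sample of $r$ lines, bound incidences in each cell trivially by the $r=1$ case, and optimize in $r$; this gives the same exponents with a different constant.
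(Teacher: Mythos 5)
Your argument is correct. The paper does not prove Theorem \ref{ST} at all --- it quotes the Szemer\'edi--Trotter theorem as a known black box (with the constants as stated, e.g., in Tao--Vu), so there is no internal proof to compare against; what you give is the standard Sz\'ekely crossing-number argument, and it does yield exactly the stated constants. Your bookkeeping checks out: the graph is simple since two distinct points lie on a unique line, the drawing has at most $\binom{|L|}{2}\leq |L|^2/2$ crossings because edges along the same line do not cross and two distinct lines meet at most once, the case $|E|<4|P|$ gives $I\leq |E|+|L|\leq 4|P|+|L|$, and otherwise the crossing lemma with constant $1/64$ (obtained from $\operatorname{cr}(G)\geq |E|-3|V|$ by sampling vertices with probability $p=4|V|/|E|\leq 1$) gives $|E|\leq 32^{1/3}|P|^{2/3}|L|^{2/3}<4|P|^{2/3}|L|^{2/3}$, so adding $|L|$ recovers the claimed bound. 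The only step you leave as a sketch is the deletion argument pinning down $1/64$, and the computation you indicate does produce that constant, so the proposal is complete in substance.
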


Elekes' result can be recovered by applying Theorem \ref{ST} to $P = (A+A) \times AA$ and $L =\{y = a(x -c) : a, c \in A\}$. Now, for an arbitrary point set, $P$, if we apply Szemer\'edi--Trotter to the set of $\Delta$--popular lines and $P$, we can simplify to obtain $$|L| \lesssim \max \{|P|^2 \Delta^{-3}, |P| \Delta^{-1} \}.$$ Typically the first term is larger (for instance if $P = A \times A$), and so we see that Szemer\'edi--Trotter is most naturally a {\em third} moment estimate.

At the forefront of a number of works concerning the sum--product phenomenon, i.e. \cite{KS, KS2, RSS, MRS, SS, Shdstar}, is the quantity $d^{+}(A)$.

\begin{defn}\label{dplus}
Let $A \subset \mathbb{R}$ finite. We define  $$d^{+}(A) := \sup_{B \neq \emptyset}\frac{ E_3^+(A,B) }{|A| |B|^2},$$ and the multiplicative analog  $$d^{\times}(A) := \sup_{B \neq \emptyset}\frac{ E_3^{\times}(A,B) }{|A| |B|^2}.$$
\end{defn} 

It follows that $1 \leq d^{+}(A), d^{\times}(A) \leq |A|$. Intuitively, the closer $d^{+}(A)$ is to $|A|$, the more additive structure $A$ has and the closer $d^{\times}(A)$ is to $|A|$, the more multiplicative structure $A$ has. Observe that the supremums in Definition \ref{dplus} are achieved for some $|B| \leq |A|^2$ since $$\frac{E_3^+(A,B) }{|A||B|^2} \leq \frac{|A|^2}{|B|}, \ \ \  \frac{E_3^{\times}(A,B) }{|A||B|^2} \leq \frac{|A|^2}{|B|}.$$

\begin{remark}\label{op} We have that $d^+(A) \sim \widetilde{d}^+(A)$, where $\widetilde{d}^+(A)$ is the smallest quantity such that $$\#\{x : r_{A - B}(x) \geq \tau \} \leq \widetilde{d}^+(A) |A| |B|^2 \tau^{-3},$$ holds for all finite $B \subset \mathbb{R}$ and $\tau \geq 1$ \cite[Lemma 17]{Shdstar}.  Indeed by Chebyshev's inequality, $\widetilde{d}^+(A) \leq d^+(A)$ since $$\#\{x : r_{A - B}(x) \geq \tau \} \leq \tau^{-3} \sum_x r_{A - B}(x)^3 \leq \frac{E_3^+(A,B)}{|A||B|^2} |A||B|^2 \tau^{-3}.$$ The reverse inequality follows, up to a logarithm, from a dyadic decomposition $$\frac{E_3^+(A,B)}{|A||B|^2} \sim \frac{1}{|A||B|^2} \displaystyle \max_{1 \leq \tau \leq |A|}  \#\{x : \tau \leq r_{A-B}(x) < 2 \tau\} \tau^3 \leq \widetilde{d}^+(A) .$$
In previous literature, $\widetilde{d}^+(A)$ been taken as the definition of $d^+(A)$. Finally, we remark that  $d^+(A)$ is related to the following operator norm 
$$d^+(A) = \frac{1}{|A|}||T_A||_{\ell^{3/2} \to \ell^3}^3, \ \  \ T_A(f) :=  \sum_{x} f(x) 1_A(y+x).$$ Thus the quantity $d^+(A)$ arises from thinking of $A$ as an {\em operator} rather than a {\em set}.
\end{remark}

The quantities $d^{+}(A)$ and $d^{\times}(A)$ can be thought of as a $\ell^3$ estimate for $r_{A-B}$ and $r_{A/B}$, where we are allowed to vary $B$. This flexibility in choosing $B$ has proved useful in applications. 

\begin{exam} Consider a random $A \subset \{1 , \ldots , n\}$ where each element is chosen independently and uniformly with probability $p > n^{-1/3}$. Clearly $|A| \sim pn$ with high probability. Let $B = \{1 , \ldots , n\}$. It follows from Chernoff's inequality (for instance, Chapter 1 of \cite{TV}) and the union bound, that every $x$ with $r_{B-B}(x) \geq  p^{-2} \log n$ satisfies
$$r_{A-B}(x) \sim p \cdot r_{B-B}(x) , \ \ \ \  r_{A-A}(x) \sim p^2 \cdot  r_{B-B}(x) .$$ 
This quickly implies $$\frac{ E_3^+(A,B) }{|A| |B|^2} \sim p|A|, \ \  \frac{ E_3^+(A) }{|A|^3} \sim p^2|A| , \ \ \frac{ E^+(A,B) }{|A| |B|} \sim |A|, \ \ \frac{ E^+(A) }{|A|^2} \sim p|A| .$$ In this example $d^+(A)$ is larger than what is predicted by the third order energy, where we only allow $B=A$ in Definition \ref{dplus}. Furthermore, the analog of Definition \ref{dplus} for additive energy is as large as possible, that is $\gtrsim |A|$; however, using more involved techniques one can show $d^+(A) \sim p |A|$. 
Thus the trivial bounds $$\frac{E_3^+(A)}{|A|^3} \leq d^+(A)  \leq \max_{B \neq \emptyset} \frac{ E^+(A,B) }{|A| |B|},$$ are not tight in general.
\end{exam}

Sumset and product set information can be deduced from upper bounds for $d^+(A)$ and $d^{\times}(A)$, respectively. For instance, a simple application of Cauchy--Schwarz and Definition \ref{dplus} applied to $B = A$ reveal \begin{equation}\label{triv} \frac{|A|^4}{|A+A|} \leq  E^+(A) \leq  E_3^{+}(A)^{1/2} \left( \sum_x r_{A-A}(x) \right)^{1/2} \lesssim  d^+(A)^{1/2} |A|^{5/2}.\end{equation} Thus we find that using this argument, one can only show $|A+A| \gtrsim |A|^{3/2}$, even with optimal information for $d^+(A)$. Improvements have been made to \eqref{triv}, which highlights the advantage of allowing $B$ to vary in Definition \ref{dplus}. 

\begin{theorem}\label{sum} [\cite[Theorem 11]{Sh} , \cite[Corollary 10]{KR}, see also \cite{SS}, \cite[Theorem 13]{MRS}, see also \cite{Shadd}] Let $A \subset \mathbb{R}$. Then $$|A+A| \gtrsim |A|^{58/37}  d^+(A)^{-21/37} , \ \ \ \ |A-A| \gtrsim |A|^{8/5}  d^+(A)^{-3/5}, \  \ \ \ E^{+}(A) \lesssim d^{+}(A)^{\frac{7}{13}} |A|^{\frac{32}{13}}.$$ 
\end{theorem}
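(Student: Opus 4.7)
My plan is to establish all three bounds through the same general machinery: combine a Szemer\'edi--Trotter incidence estimate (Theorem \ref{ST}) with the level-set reformulation of $d^+(A)$ recorded in Remark \ref{op}, then balance the resulting dyadic sum. The guiding intuition is that $d^+(A)$ controls how many differences can be ``popular'' at a given threshold $\tau$, while Szemer\'edi--Trotter controls how the popular differences can be geometrically arranged.

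For the energy bound $E^+(A) \lesssim d^+(A)^{7/13} |A|^{32/13}$, I would begin with the dyadic identity $E^+(A) \sim \tau^2 N_\tau$ for some dyadic $\tau$, where $N_\tau := |\{x : r_{A-A}(x) \sim \tau\}|$. Remark \ref{op} with $B = A$ immediately yields the ``global'' bound $N_\tau \lesssim d^+(A) |A|^3 \tau^{-3}$. The key is then to obtain a second, sharper estimate by choosing a witness $B$ with $E_3^+(A,B) \sim d^+(A) |A| |B|^2$ and encoding the heavy differences as lines with many incidences on a product set derived from $A$ and $B$. Theorem \ref{ST} will bound the number of such lines in terms of $|A|$, $|B|$, and $\tau$, and the exponent $7/13$ will emerge by optimizing the trade-off between the two $N_\tau$-bounds over $\tau$ and $|B|$.

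For $|A-A| \gtrsim |A|^{8/5} d^+(A)^{-3/5}$, I would not use the na\"\i ve Cauchy--Schwarz bound $|A|^4 \leq |A-A| E^+(A)$ directly (which plugged into the previous step only yields exponent $20/13 < 8/5$). Instead, following Schoen--Shkredov, one localizes to the level set contributing most to $E^+(A)$, then applies Szemer\'edi--Trotter to a configuration of lines of varying slopes indexed by $A$; the second moment $d^+(A)$ enters as the control on how many such lines can be heavy. For $|A+A| \gtrsim |A|^{58/37} d^+(A)^{-21/37}$, the Pl\"unnecke--Ruzsa inequality transfers between sum and difference sets, but the sharper exponent $58/37$ requires a Murphy--Rudnev--Shkredov style iteration in which one feeds the difference-set bound back into an energy estimate and reapplies incidences on the configuration $(A-A) \times A$.

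The main obstacle in all three cases is the balancing step: Szemer\'edi--Trotter has two regimes, and the optimal $\tau$ depends on which one dominates, so the fractional denominators $13$, $5$, $37$ arise from careful bookkeeping across dyadic ranges. Since each of the three inequalities is already proved in the references \cite{Sh,KR,SS,MRS,Shadd}, I would quote them directly rather than redo the optimization, noting only that the $E^+$ bound is the most robust and implies partial versions of the other two via Pl\"unnecke--Ruzsa.
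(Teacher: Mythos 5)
The paper itself gives no proof of this theorem: it is quoted directly from the cited literature (\cite{Sh}, \cite{KR}, \cite{SS}, \cite{MRS}, \cite{Shadd}), and your proposal ultimately does exactly the same by deferring to those references, so the approaches coincide. Your preliminary sketch of the Szemer\'edi--Trotter/dyadic-balancing machinery is a reasonable heuristic for how those references argue (and your check that the na\"ive Cauchy--Schwarz route only gives exponent $20/13$ is correct), but it is not needed since the statement is being used as a quoted black box.
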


The multiplicative versions of these bounds all hold by applying the additive version to the bigger of $\log \{a \in A : a > 0\}$ and $\log -\{a \in A : a< 0\}$. Thus one basic strategy in several sum--product improvements is as follows: use Szemer\'edi--Trotter to obtain a third moment estimate and then use Theorem \ref{sum} to get improved sum--product bounds. The strength of such theorems can be accurately tested by setting $d^{+}(A) = 1$. Thus the result for difference sets is slightly stronger than that of sumsets and both are stronger than what we can say about the more general additive energy. Quantitative improvements to Theorem \ref{sum} would improve all of our main theorems.

\begin{remark}\label{con} Suppose that one was able to improve upon Theorem \ref{sum} to $$|A+A| \gtrsim |A|^2 d^+(A)^{-1}.$$ Then, combining this with the multiplicative version: $$|AA| \gtrsim |A|^2 d^{\times}(A)^{-1},$$ and applying Theorem \ref{main} below, we would have the sum--product estimate $$|A+A| |AA| \gtrsim |A|^{3}.$$ With this viewpoint, the obstacle to further improvements of Conjecture \ref{esconj} is our current individual understanding of ``sum" and ``product," rather than the combination of the two. The question is how much second moment information can be extracted from third moment information.
\end{remark}

Information about $d^{+}(A)$ and $d^{\times}(A)$  can be used in conjunction with Theorem \ref{sum} to obtain bounds for sum--product problems. Our next theorem shows that these two quantities are related. 

\begin{theorem} \label{main} Let $A \subset \mathbb{R}$ be finite. Then there exists $X , Y \subset A$ such that
\begin{itemize}
\item[(i)] $X \cup Y = A$,
\item[(ii)] $|X| , |Y| \geq |A| /2$,
\item[(iii)] $d^+(X) d^{\times}(Y) \lesssim |A|$.
\end{itemize}
\end{theorem}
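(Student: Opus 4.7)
My plan is to combine Szemer\'edi--Trotter with a richness-based decomposition. By Remark~\ref{op}, working with $\widetilde{d}^+, \widetilde{d}^\times$ is equivalent up to logarithmic factors; let $K_+ = \widetilde{d}^+(A)$ and $K_\times = \widetilde{d}^\times(A)$. Dyadically pigeonhole to find witnessing pairs $(B_+, \tau_+)$ and $(B_\times, \tau_\times)$ so that the level sets
$$L_+ = \{x : r_{A - B_+}(x) \geq \tau_+\}, \qquad L_\times = \{y : r_{A/B_\times}(y) \geq \tau_\times\}$$
satisfy $|L_+|\tau_+^3 \sim K_+ |A| |B_+|^2$ and the multiplicative analogue. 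We may assume $K_+ K_\times \gg |A|$, for otherwise $X = Y = A$ already yields the bound.

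For each $a \in A$, define the additive and multiplicative popularities
$$S_+(a) = \#\{b \in B_+ : a - b \in L_+\}, \qquad S_\times(a) = \#\{b' \in B_\times : a/b' \in L_\times\}.$$
Apply Szemer\'edi--Trotter (Theorem~\ref{ST}) to the point set $L_+ \times L_\times$ and the line family $\ell_{b, b'} : y = (x + b)/b'$ with $(b, b') \in B_+ \times B_\times$: each triple $(a, b, b')$ with $a - b \in L_+$ and $a/b' \in L_\times$ corresponds to an incidence $(a-b, a/b') \in \ell_{b, b'}$, giving
$$\sum_{a \in A} S_+(a) S_\times(a) \lesssim (|L_+||L_\times|)^{2/3}(|B_+||B_\times|)^{2/3},$$
the main Szemer\'edi--Trotter term dominating in the regime of interest.

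Set thresholds $T_+ \sim K_+ |B_+|^2/\tau_+^2$ and $T_\times \sim K_\times |B_\times|^2/\tau_\times^2$, roughly four times the respective averages of $S_+, S_\times$. By Markov, the rich sets $A_+^{\mathrm{rich}} := \{a : S_+(a) \geq T_+\}$ and $A_\times^{\mathrm{rich}} := \{a : S_\times(a) \geq T_\times\}$ each have cardinality at most $|A|/4$, while substituting into the incidence bound shows that the overlap $A^* := A_+^{\mathrm{rich}} \cap A_\times^{\mathrm{rich}}$ satisfies $|A^*| \lesssim |A|^{4/3}/(K_+ K_\times)^{1/3} \leq |A|/4$ under our assumption. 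Placing $A^*$ into $Y$ only, define $X = A \setminus A_+^{\mathrm{rich}}$ and $Y = A \setminus (A_\times^{\mathrm{rich}} \setminus A^*)$; then $X \cup Y = A$ and $|X|, |Y| \geq |A|/2$.

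The main obstacle is upgrading to $\widetilde{d}^+(X) \widetilde{d}^\times(Y) \lesssim |A|$. Removing the rich elements controls the level sets for the specific witnessing pairs $(B_+, \tau_+)$ and $(B_\times, \tau_\times)$ directly, but $\widetilde{d}^+(X)$ is a supremum over all $(B, \tau)$. I would handle this uniformity by iteration: if some alternative pair witnesses a violation of the bound for $X$ or $Y$, restart the decomposition argument using it in place of $(B_+, \tau_+)$ or $(B_\times, \tau_\times)$. Each iteration reduces the relevant energy budget by a definite constant factor, so after $O(\log |A|)$ steps the process stabilizes, and the logarithmic overhead is absorbed into $\lesssim$.
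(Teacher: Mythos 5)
There is a genuine gap, and it sits exactly where you flag it, but it is more serious than a missing uniformity step: your richness truncation does not control the relevant quantity even for the single witnessing pair. The quantity $S_+(a)$ counts how many popular differences $a$ participates in, so capping it only bounds the $\ell^1$-type sum $\sum_{x\in L_+} r_{X-B_+}(x)=\sum_{a\in X}S_+(a)\le |X|\,T_+\sim |L_+|\tau_+$, which is exactly the bound you already had for $A$ itself; it does not force any $x\in L_+$ to lose its $\tau_+$ representations, so $\#\{x: r_{X-B_+}(x)\ge\tau_+\}$ can remain comparable to $|L_+|$ and $\widetilde{d}^+(X)$ need not drop below $K_+$ at all. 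Your incidence estimate does correctly show that few elements of $A$ are rich for both witnessing pairs simultaneously, but that is a statement about individual elements and it never converts into a bound of the form (iii), which is a statement about level sets of $r_{X-B}$ for \emph{every} test set $B$. Relatedly, the proposed fix for uniformity (``restart with a violating pair'') has no identified monotone quantity: a new pair produces entirely new rich sets and a new $X,Y$, nothing guarantees that pairs treated earlier remain controlled, and the claim that each iteration ``reduces the relevant energy budget by a definite constant factor'' is unsupported.

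The paper resolves precisely this uniformity problem by a structural mechanism your argument lacks. Lemma~\ref{lem1} extracts from $T$ a popular subset $A'$ that is \emph{contained in a set of popular quotients} $\{x: r_{Q/R}(x)\ge t\}$, so that $D^{\times}(A')$ (Definition~\ref{Dstar}) is small; the Szemer\'edi--Trotter input then enters once and for all through \eqref{key}, $d^+(A')\lesssim D^{\times}(A')$, which bounds $E_3^+(A',B)$ for every test set $B$ simultaneously, and the extraction comes with the tradeoff $d^{\times}(T)\,d^+(A')\lesssim |A'|^2/|T|$ that ultimately yields the product bound in (iii). Iterating until half of $A$ is covered, controlling $d^+$ of the union $X=\bigcup_j A_j$ via the $\ell^3$ triangle inequality (Lemma~\ref{lem2}), and using monotonicity of $|T|\,d^{\times}(T)$ for $Y$ finishes the proof. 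If you want to salvage your approach, the missing idea is to replace ``discard elements that are rich for one witnessing pair'' by ``keep elements lying in a popular-quotient (resp.\ popular-difference) set,'' since it is that containment which lets a single application of Szemer\'edi--Trotter dominate the supremum over all $B$.
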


This is optimal, as can be seen by taking $A$ to be an arithmetic or geometric progression. We point out that the sets $X$ and $Y$ in Theorem \ref{main} have the convenient property that they are both of size at least $|A|/2$, which has not always been the case with decomposition results; this type of result first appeared in \cite[Theorem 12]{RSS}. Theorem \ref{main} can be interpreted as a $d^+, d^{\times}$ analog of Elekes' \cite{El} sum--product bound, in light of \eqref{triv}. Since Theorem \ref{sum} is better than \eqref{triv}, we can go beyond this Elekes threshold, answering a question in \cite{RSS}.

\begin{restatable}{theorem}{decomp}\label{decomp}
Let $A \subset \mathbb{R}$ be finite and $\delta = 1/4$. Then there exist $B,C$ that partition $A$ with $${\rm max}\{d^{+}(B) , d^{\times}(C)\} \lesssim 	 |A|^{1 - 2 \delta}.$$ Furthermore, $${\rm max}\{ E^{+}(B) , E^{\times}(C) \} \lesssim |A|^{3-\frac{14}{13} \delta} , \ \ \ {\rm max}\{ E^{+}(B,A) , E^{\times}(C, A) \} \lesssim |A|^{3-\delta}.$$
\end{restatable}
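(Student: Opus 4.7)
The plan is to deduce Theorem \ref{decomp} from Theorem \ref{main} via an iterative two-coloring of $A$, and then to convert the resulting third-moment control into the claimed second-moment energy bounds using Theorem \ref{sum} and a short Cauchy--Schwarz step. Throughout I fix $\delta = 1/4$, so $|A|^{1-2\delta} = |A|^{1/2}$.

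First I would build the partition by iterating Theorem \ref{main}. Set $A_0 = A$; while $A_i \neq \emptyset$, apply Theorem \ref{main} to $A_i$ to obtain $X_i, Y_i \subseteq A_i$ covering $A_i$, each of size $\geq |A_i|/2$, with $d^+(X_i)\, d^\times(Y_i) \lesssim |A_i| \leq |A|$. In particular $\min\{d^+(X_i), d^\times(Y_i)\} \lesssim |A|^{1-2\delta}$. If the minimum is $d^+(X_i)$, assign $X_i$ to the additive side and set $A_{i+1} = A_i \setminus X_i$; otherwise assign $Y_i$ to the multiplicative side and set $A_{i+1} = A_i \setminus Y_i$. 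Since $|A_{i+1}| \leq |A_i|/2$, this halts after $n = O(\log |A|)$ steps, yielding a partition $A = B \sqcup C$ in which $B$ is a disjoint union of sets $R_1,\ldots$ with $d^+(R_j) \lesssim |A|^{1-2\delta}$ and $C$ is a disjoint union of sets $S_1,\ldots$ with $d^\times(S_j) \lesssim |A|^{1-2\delta}$.

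A routine union bound then promotes this to control of $d^+(B)$ and $d^\times(C)$. For any finite $K$, disjointness gives $r_{B-K}(x) = \sum_j r_{R_j - K}(x)$, and H\"older yields $r_{B-K}(x)^3 \leq n^2 \sum_j r_{R_j - K}(x)^3$. Summing over $x$ and using $\sum_j |R_j| = |B|$,
\begin{equation*}
\frac{E_3^+(B,K)}{|B||K|^2} \leq \frac{n^2}{|B|}\sum_j \frac{E_3^+(R_j,K)}{|K|^2} \leq n^2 \max_j d^+(R_j).
\end{equation*}
Taking $\sup_K$ and absorbing the $O(\log^2 |A|)$ loss into $\lesssim$ gives $d^+(B) \lesssim |A|^{1-2\delta}$, and the same computation gives $d^\times(C) \lesssim |A|^{1-2\delta}$.

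For the energy bounds I would simply plug in. From Theorem \ref{sum}, $E^+(B) \lesssim d^+(B)^{7/13} |B|^{32/13} \lesssim |A|^{(1-2\delta)\cdot 7/13 + 32/13} = |A|^{3 - 14\delta/13}$, and identically for $E^\times(C)$. For the mixed energy, Cauchy--Schwarz in the form $\sum_x r(x)^2 \leq (\sum_x r(x))^{1/2}(\sum_x r(x)^3)^{1/2}$ applied to $r = r_{B-A}$, combined with $\sum_x r_{B-A}(x) = |B||A|$ and the definition of $d^+(B)$ at $K = A$ (which gives $E_3^+(B,A) \leq d^+(B)|B||A|^2$), yields $E^+(B,A) \leq d^+(B)^{1/2} |B| |A|^{3/2} \lesssim |A|^{3-\delta}$, and symmetrically on the multiplicative side. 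Once Theorem \ref{main} is granted, the argument is essentially bookkeeping; the only mild technical nuisance I anticipate is the standard sign issue when invoking the multiplicative side of Theorem \ref{main}, handled as usual by passing to the larger of $A \cap (0,\infty)$ and $-A \cap (0,\infty)$ and taking logarithms, which costs only a constant factor.
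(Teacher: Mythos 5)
Your proposal is correct and follows essentially the same route as the paper: iterate Theorem \ref{main} on the remainder (each step removes at least half, so $O(\log|A|)$ pieces), sort pieces by whether $d^+$ or $d^\times$ is $\lesssim |A|^{1/2}$, control $d^+(B)$ and $d^\times(C)$ by a union/H\"older argument losing only logarithmic factors (your direct computation is just Lemma \ref{lem2} plus H\"older folded together), and then apply the third bound of Theorem \ref{sum} for $E^+(B)$, $E^\times(C)$ and Cauchy--Schwarz with $E_3^+(B,A) \leq d^+(B)|B||A|^2$ for the mixed energies. The only cosmetic difference is that the paper stops the iteration once the remainder has size at most $|A|^{1/2}$ (absorbing it trivially), whereas you iterate to exhaustion; both give the same bounds.
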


This improves upon \cite[Theorem 4]{Shdstar} as well as \cite[Theorem 1]{RSS} and builds upon the work found there. Note that in the last inequality we have a $\delta$ in place of a $\delta / 2$. While Theorem \ref{main} is optimal, we do not expect Theorem \ref{decomp} to be optimal. This lies at the heart of the sum--product phenomenon. With current technology, we are unable to fully rule out the possibility of a set with partial additive and multiplicative structure.  Note the example above in \eqref{ex} shows that one cannot prove $2 \delta > 3/4$, as explained in \cite{Shdstar}.

We now mention more applications of Theorem \ref{main}. First, we consider the difference--quotient and difference--product problems. For $A \subset \mathbb{R}$, we set $$A^{-1} = \{a^{-1} : a \in A\},$$ where we adopt the convention that $0^{-1} = 0$. 
\begin{conj}\label{diff} Let $\delta \leq 1$. Then for any finite $A \subset \mathbb{R}$, one has $$ |A-A| + |A A^{-1}| \gtrsim |A|^{1 + \delta},$$ $$|A-A| + |AA| \gtrsim |A|^{1 + \delta}.$$
\end{conj}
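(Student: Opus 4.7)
The natural plan is to mirror the strategy used for the Erd\H{o}s--Szemer\'edi sum--product conjecture, invoking the decomposition of Theorem \ref{main} together with the sharpest available sum-set bounds in terms of $d^+$. For the difference-quotient form, I would partition $A = X \cup Y$ via Theorem \ref{main} so that $|X|, |Y| \geq |A|/2$ and $d^+(X) \cdot d^\times(Y) \lesssim |A|$. Applying the difference-set bound from Theorem \ref{sum} to $X$ and its multiplicative analog (via taking logs of the positive/negative parts) to $Y$, one obtains $|X - X| \gtrsim |A|^{8/5} d^+(X)^{-3/5}$ and $|Y Y^{-1}| \gtrsim |A|^{8/5} d^\times(Y)^{-3/5}$, whose product is $\gtrsim |A|^{13/5}$. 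AM--GM then yields $|A - A| + |A A^{-1}| \gtrsim |A|^{13/10}$, and an analogous estimate using $|YY| \gtrsim |Y|^{8/5} d^\times(Y)^{-3/5}$ handles the difference-product form. This is the baseline the plan must improve on in order to reach the full $\delta \leq 1$ range.

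Closing the gap to $\delta = 1$ requires two independent strengthenings. The first, indicated in Remark \ref{con}, is replacing the $8/5$ exponent in Theorem \ref{sum} by the conjecturally sharp $|X - X| \gtrsim |X|^2 / d^+(X)$ (with multiplicative analog $|YY^{-1}| \gtrsim |Y|^2/d^\times(Y)$); combined with Theorem \ref{main}, this would upgrade the product bound to $|A - A| \cdot |A A^{-1}| \gtrsim |A|^3$, giving $\delta = 1/2$ after AM--GM. The second is an argument that bypasses the AM--GM loss entirely, producing a \emph{single} subset $Z \subseteq A$ with $|Z| \gtrsim |A|$ on which $d^+(Z)$ and $d^\times(Z)$ are both $\lesssim 1$, or equivalently a dichotomy routing the argument so that the individual bounds for $|A-A|$ and $|AA^{-1}|$ are \emph{added} at their respective trivial maxima $\sim |A|^2$ rather than geometrically averaged.

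The main obstacle is that the Balog--Wooley example \eqref{ex} (with $S = P^2$) rules out such a simultaneous anti-structure decomposition for arbitrary $A$, so any plan to prove Conjecture \ref{diff} with the full range of $\delta$ must handle sets resembling \eqref{ex} by hand, exploiting the explicit additive-multiplicative arithmetic they carry (e.g.\ passing to a long arithmetic or geometric subprogression and treating its complement separately). A secondary obstacle is achieving the sharp $d^+$-to-difference-set conversion: this would need an incidence-theoretic input genuinely finer than the Szemer\'edi--Trotter theorem, since the current $|A|^{8/5}$ threshold is the natural ceiling of a single application of Theorem \ref{ST} to $\Delta$-popular lines, and higher-moment information alone cannot recover second-moment behavior without new structural hypotheses. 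Absent these two ingredients, the strategy above only secures $\delta = 3/10$; surmounting the first would advance the bound to $\delta = 1/2$, while the full conjecture requires overcoming the second as well.
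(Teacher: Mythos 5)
The statement you were asked about is Conjecture~\ref{diff}, which the paper does not prove --- it is the open difference--quotient/difference--product analogue of the Erd\H{o}s--Szemer\'edi conjecture, and the paper only establishes the partial result Theorem~\ref{diffquot} (via Proposition~\ref{dq}): $\delta = 3/10$ for $|A-A|+|AA^{-1}|$ and $\delta = 7/24$ for $|A-A|+|AA|$. Your proposal is honest about this, and your ``baseline'' for the difference--quotient case is exactly the paper's argument: decompose $A = X \cup Y$ by Theorem~\ref{main}, apply $|X-X| \gtrsim |X|^{8/5} d^+(X)^{-3/5}$ and $|YY^{-1}| \gtrsim |Y|^{8/5} d^{\times}(Y)^{-3/5}$, and use $d^+(X) d^{\times}(Y) \lesssim |A|$ to get $|A-A||AA^{-1}| \gtrsim |A|^{13/5}$. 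Your speculative route to larger $\delta$ (the conjectural $|X-X| \gtrsim |X|^2/d^+(X)$ of Remark~\ref{con}, and the observation that \eqref{ex} blocks a simultaneous anti-structure decomposition) is a fair description of the known obstacles, but of course it is a plan, not a proof, so the conjecture itself remains unproved by your proposal just as it does in the paper.

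There is, however, one concrete error in your baseline: for the difference--product form you invoke ``$|YY| \gtrsim |Y|^{8/5} d^{\times}(Y)^{-3/5}$,'' but the multiplicative analogue of the \emph{difference}-set bound in Theorem~\ref{sum} (obtained by taking logarithms) controls the \emph{quotient} set $YY^{-1}$, not the product set $YY$. For $YY$ one must use the multiplicative analogue of the \emph{sumset} bound, namely $d^{\times}(Y) \gtrsim |Y|^{58/21} |YY|^{-37/21}$, which is precisely what Proposition~\ref{dq} does and why the paper obtains only $|A-A|^{35}|AA|^{37} \gtrsim |A|^{93}$, i.e.\ $\delta = 7/24$, in that case rather than the $\delta = 3/10$ your proposal would claim. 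As written, your difference--product estimate asserts something strictly stronger than the paper proves, without justification.
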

Solymosi's \cite{So} techniques do not work for difference sets, but Elekes' \cite{El} do which shows that Conjecture \ref{diff} holds for $\delta = 1/4$. Solymosi's earlier work \cite{So2} implies that $\delta = 3/11$ is admissible in Conjecture \ref{diff}. Konyagin and Rudnev \cite{KR} adapted techniques from \cite{SS} to show the first statement of Conjecture \ref{diff} holds for $\delta = 1 + 9/31$ and the second statement holds for $\delta = 1 + 11/39$. Using Theorem \ref{main}, we improve their results.
\begin{theorem}\label{diffquot} Let $A \subset \mathbb{R}$ be finite. Then $$|A-A| + |A A^{-1}| \gtrsim   |A|^{1 + 3/10}, $$
$$|A-A| + |A A| \gtrsim   |A|^{1 + 7/24}$$
\end{theorem}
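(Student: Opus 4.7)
The plan is to combine the decomposition from Theorem \ref{main} with the sum/difference-set lower bounds of Theorem \ref{sum}. First, apply Theorem \ref{main} to obtain $X, Y \subseteq A$ with $X \cup Y = A$, $|X|, |Y| \geq |A|/2$, and $d^+(X)\, d^\times(Y) \lesssim |A|$. Since $X, Y \subseteq A$, any lower bound on $|X - X|$, $|YY^{-1}|$, or $|YY|$ transfers immediately to $|A - A|$, $|AA^{-1}|$, or $|AA|$.

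For the first estimate $|A-A| + |AA^{-1}| \gtrsim |A|^{1 + 3/10}$, I would apply the difference-set half of Theorem \ref{sum} twice: additively to $X$ to get
$$|X - X| \gtrsim |X|^{8/5}\, d^+(X)^{-3/5},$$
and in its multiplicative form to $Y$ to get
$$|YY^{-1}| \gtrsim |Y|^{8/5}\, d^\times(Y)^{-3/5}.$$
Setting $K = |A-A| + |AA^{-1}|$, each of these inequalities rearranges into a lower bound on $d^+(X)$ and $d^\times(Y)$ of the form $\gtrsim |A|^{8/3} K^{-5/3}$; multiplying the two and invoking the constraint $d^+(X)\, d^\times(Y) \lesssim |A|$ should produce $K^{10/3} \gtrsim |A|^{13/3}$, i.e.\ $K \gtrsim |A|^{13/10}$.

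For the second estimate $|A-A| + |AA| \gtrsim |A|^{1 + 7/24}$, I would replace the quotient-set bound for $Y$ with the product-set analog of the sumset bound in Theorem \ref{sum},
$$|YY| \gtrsim |Y|^{58/37}\, d^\times(Y)^{-21/37},$$
while keeping the $|X-X|$ estimate for $X$. The analogous optimization---solve each bound for the corresponding $d$-quantity, multiply, and apply $d^+(X)\, d^\times(Y) \lesssim |A|$---gives
$$|A| \gtrsim |A|^{\frac{8}{3} + \frac{58}{21}}\, K^{-\frac{5}{3} - \frac{37}{21}} = |A|^{38/7}\, K^{-24/7},$$
so that $K \gtrsim |A|^{31/24} = |A|^{1 + 7/24}$, as required.

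The substantive content of the argument is already packaged into Theorems \ref{main} and \ref{sum}: the former converts the intuition ``a set cannot be both additively and multiplicatively structured'' into the clean product bound $d^+(X)\, d^\times(Y) \lesssim |A|$, and the latter provides the sharper-than-Cauchy--Schwarz translation from third-moment control into sum/difference-set size. What remains is essentially bookkeeping; the only points to verify are that the factors $|X|, |Y| \geq |A|/2$ are harmlessly absorbed into the $\gtrsim$ notation, and that the exponent arithmetic works out cleanly for the pairs $(8/5, 3/5)$ and $(58/37, 21/37)$, which it does. I do not expect any genuine obstacle beyond this.
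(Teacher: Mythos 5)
Your proposal is correct and follows essentially the same route as the paper: the paper likewise combines Theorem \ref{main} with the difference-set and sumset bounds of Theorem \ref{sum} (in the forms $d^+(X)\gtrsim |X|^{8/3}|X-X|^{-5/3}$ and $d^{\times}(Y)\gtrsim |Y|^{58/21}|YY|^{-37/21}$), only phrasing the conclusion as the product inequalities $|A-A||AA^{-1}|\gtrsim |A|^{13/5}$ and $|A-A|^{35}|AA|^{37}\gtrsim |A|^{93}$ before deducing the stated sums. Your exponent arithmetic matches the paper's exactly.
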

In a similar spirit to the sum--product phenomenon, there are a host of ``expander problems," for instance \cite{BR, RN,MRS, Shkredov, RRSS}. They roughly state that when one creates a set by combining addition and multiplication, the resulting set should be large. For instance, it is of interest to find the best lower bounds for $$|AA\pm A|, \  |AA\pm AA|, \  |A(A\pm A)|, \ \max_{a \in A} |A(A \pm a)|.$$ Typically what happens is that one can apply Szemer\'{e}di--Trotter to obtain a lower bound of the order of magnitude of $|A|^{3/2}$ (see chapter 8 of \cite{TV}) and improving upon this takes additional ideas that usually depend of the structure of the expander (see for instance \cite{RN, MRS, RRSS}). The problem of $AA+A$ is unique in that it has resisted improvements from Szemer\'{e}di--Trotter (see \cite{Shkredov}), until a very recent preprint of Roche--Newton, Ruzsa, Shen, and Shkredov \cite{RRSS}. Typically expanders are conjectured to have size $\gtrsim |A|^2$, but we are usually far from proving so. 

We use Theorem \ref{main} to improve upon the lower bound for the expanders found in \cite{MRS}. Our idea is to use their techniques to the subsets of $A$ appearing in Theorem \ref{main} which have more suitable additive and multiplicative structure. 

\begin{theorem}\label{expander}
Let $A \subset \mathbb{R}$ be finite. Then $$|A(A-A)| \gtrsim |A|^{3/2 + 7/226},$$ $$|A(A+A)| \gtrsim |A|^{3/2 + 1/46},$$ $$\max_{a\in A} |A(A \pm a)| \gtrsim |A|^{3/2 + 1/182}.$$
\end{theorem}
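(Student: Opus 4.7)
The plan is to apply Theorem \ref{main} to decompose $A$ into two large pieces $X$ and $Y$ with complementary energy control, then run the \cite{MRS}-style expander arguments on whichever piece is better suited. Concretely, Theorem \ref{main} produces $X, Y \subseteq A$ with $X \cup Y = A$, $|X|, |Y| \geq |A|/2$, and $d^{+}(X) \cdot d^{\times}(Y) \lesssim |A|$. Since $X(X \pm X) \subseteq A(A \pm A)$ (and similarly for $Y$), it suffices to derive expander bounds for each piece in terms of the relevant third-order energy quantity and then pass to $A$ for free.

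First, I would isolate the relevant \cite{MRS} input: their arguments give lower bounds of the form $|Z(Z \pm Z)| \gtrsim |Z|^{3/2} \cdot F(d^{+}(Z))^{-1}$, where $F$ is an explicit increasing function, and via a logarithmic substitution one obtains an analog in terms of $d^{\times}(Z)$ (passing to a multiplicative variant of the expander, such as an estimate for $A + A/A$ controlling $|A(A \pm A)|$). Applying the $d^{+}$-version to $X$ and the $d^{\times}$-version to $Y$ yields two candidate lower bounds for each of $|A(A-A)|$, $|A(A+A)|$, and $\max_{a \in A} |A(A \pm a)|$, one controlled by $d^{+}(X)$ and the other by $d^{\times}(Y)$.

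Second, I would balance these bounds against the constraint $d^{+}(X) \cdot d^{\times}(Y) \lesssim |A|$; this is a one-parameter optimization that gives the claimed exponents $3/2 + 7/226$, $3/2 + 1/46$, and $3/2 + 1/182$ for the three expanders. The optimal split typically sits near $d^{+}(X), d^{\times}(Y) \sim |A|^{1/2}$, and the small differences between the three exponents are inherited from the different MRS-type constants attached to each expander (the difference-set variant being stronger than the sum-set variant, and the single-shift variant being the weakest). The bound for $\max_{a \in A}|A(A \pm a)|$ requires an additional pigeonholing step to ensure that the popular shift $a$ can be taken from the favorable subset.

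The main obstacle will be producing the multiplicative analog of the MRS bound, since the three expanders mix addition and multiplication in subtly different ways and each must be re-derived after a logarithmic reformulation. Once these parallel bounds are in place, the combination with Theorem \ref{main} and the optimization of exponents are routine but book-keeping intensive, with the numerology of $7/226$, $1/46$, and $1/182$ falling out of the arithmetic of the underlying MRS estimates applied with $d^{+}, d^{\times} \sim |A|^{1/2}$.
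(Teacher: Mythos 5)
Your high-level skeleton --- decompose $A$ via Theorem \ref{main} into large pieces $X,Y$ with $d^{+}(X)d^{\times}(Y)\lesssim |A|$, play an additive bound on $X$ against a multiplicative bound on $Y$, and balance --- is indeed the paper's strategy, but the mechanism you propose for the multiplicative side is a genuine gap. There is no ``logarithmic substitution'' that converts the expanders $A(A\pm A)$ or $A(A\pm a)$ into objects controlled by $d^{\times}$: taking logarithms destroys the additive part of the expander, and the set $A+A/A$ you invoke neither contains nor is contained in (a dilate of) $A(A+A)$ in any useful way. The way $d^{\times}(Y)$ actually enters is through multiplicative \emph{energy}: Lemma \ref{one} gives some $b\in Y$ with $|Y|^{6}\lesssim |Y(Y+b)|^{2}E^{\times}(Y)$, and the third bound of Theorem \ref{sum}, applied multiplicatively (this is where the log trick legitimately lives, at the level of energies), gives $E^{\times}(Y)\lesssim d^{\times}(Y)^{7/13}|Y|^{32/13}$. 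Together these show that if $|A(A\pm a)|\leq r|A|^{3/2}$ for all $a\in A$, then $d^{\times}(Y)\gtrsim |A|r^{-26/7}$ and hence $d^{+}(X)\lesssim r^{26/7}$ (Lemma \ref{combine}); note the shift $b$ already lies in $Y\subset A$, so monotonicity $Y(Y+b)\subset A(A+b)$ suffices and no extra pigeonholing is needed.

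Your claimed additive input $|Z(Z\pm Z)|\gtrsim |Z|^{3/2}F(d^{+}(Z))^{-1}$ is also not what is available, nor what is needed: for $A(A\pm A)$ the paper simply uses $|A(A\pm A)|\geq |A\pm A|\geq |X\pm X|$ together with the sumset and difference-set bounds of Theorem \ref{sum} in terms of $d^{+}(X)$, while for $\max_{a}|A(A\pm a)|$ one needs Lemma \ref{two}, which bounds $E^{\times}(X)^{2}$ by $|X(X+\alpha)|\,|X|^{58/13}d^{+}(X)^{7/13}$, i.e.\ it mixes $E^{\times}$ and $d^{+}$ rather than being a function of $d^{+}$ alone. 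Your heuristic that the optimum sits near $d^{+}(X),d^{\times}(Y)\sim |A|^{1/2}$ is likewise off: in the actual argument the regime is lopsided, with $d^{\times}(Y)$ nearly maximal and $d^{+}(X)\lesssim r^{26/7}$ a tiny power of $|A|$ (at $d^{+}(X)\sim|A|^{1/2}$ the bounds of Theorem \ref{sum} give less than $|A|^{3/2}$ and are useless), and the exponents $7/226$, $1/46$, $1/182$ emerge only from this regime. As written, the proposal defers all quantitative work and rests the multiplicative half on a transformation that does not exist, so it does not constitute a proof.
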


 Note that Solymosi's technique in \cite{So} is better suited for sumsets, while Shkredov's and his coauthors techniques (as in Theorem \ref{sum}) are better suited for difference sets. This subtlety is not at the heart of the sum--product phenomenon, so we mention the following theorem which we will prove during the proof of Theorem \ref{sumprod}.

\begin{theorem}\label{random}
Let $A \subset \mathbb{R}$ be finite. Then $$|A+A| + |A-A| + |AA| + |AA^{-1}| \gtrsim |A|^{4/3 + 1/753}.$$
\end{theorem}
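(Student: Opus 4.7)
Theorem~\ref{random} is a close cousin of Theorem~\ref{sumprod}: the only change is that the left-hand side now also includes $|A-A|$ and $|AA^{-1}|$, and this allows us to substitute the stronger difference- and quotient-set inequalities from Theorem~\ref{sum} for the weaker sum- and product-set ones wherever they appear. I would run the argument in parallel with the proof of Theorem~\ref{sumprod}, using the same decomposition but invoking the sharper bounds at the last step.

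First, apply Theorem~\ref{main} to obtain $X,Y\subseteq A$ with $X\cup Y=A$, $|X|,|Y|\geq |A|/2$, and $d^+(X)\,d^\times(Y)\lesssim |A|$. Parametrize $d^+(X)\lesssim |A|^\alpha$ and $d^\times(Y)\lesssim |A|^{1-\alpha}$ for some $\alpha\in[0,1]$ determined by the decomposition. Feeding this into the difference- and quotient-set parts of Theorem~\ref{sum} (the latter after passing to logarithms of the larger of the positive/negative parts of $Y$) gives
\[
|A-A|\geq |X-X|\gtrsim |A|^{8/5-3\alpha/5},\qquad |AA^{-1}|\geq |YY^{-1}|\gtrsim |A|^{1+3\alpha/5},
\]
while the sumset and product-set parts of Theorem~\ref{sum} yield
\[
|A+A|\gtrsim |A|^{58/37-21\alpha/37},\qquad |AA|\gtrsim |A|^{1+21\alpha/37}.
\]
Using only the first pair, the worst case $\alpha=1/2$ yields $|A|^{13/10}$, which recovers Theorem~\ref{diffquot} but falls short of the target $|A|^{4/3+1/753}$.

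To close the gap around the balanced value $\alpha\approx 1/2$, I would couple the inequalities above with a Solymosi-type sum-product bound---either $|A+A|^2|AA|\gtrsim |A|^4$ or one of its Konyagin--Shkredov/Rudnev--Shkredov--Stevens refinements---applied either to $A$ itself or to one of $X,Y$. Writing $M$ for the left-hand side of Theorem~\ref{random}, $M$ simultaneously majorizes every right-hand side above, producing a linear system in $\log_{|A|}M$ and $\alpha$. Optimizing in $\alpha$, the extremum occurs at a value where two or three of the constraints bind together, and the resulting lower bound on $M$ is strictly larger than $|A|^{4/3}$; the fraction $1/753$ should fall out from the interaction of the $8/5,\,3/5$ exponents in the difference/quotient part of Theorem~\ref{sum} with the sum-product input.

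The main obstacle is not conceptual but purely numerical: one must identify exactly which subset of the several bounds are simultaneously tight at the optimal $\alpha$, and verify that with the sharpest currently available sum-product bound the denominator is indeed $753$ rather than something larger. Once the right combination of inequalities has been assembled, the rest of the proof is bookkeeping; no new ingredient beyond Theorems~\ref{main} and~\ref{sum} together with an off-the-shelf sum-product estimate is required.
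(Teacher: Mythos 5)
There is a genuine gap: the combination of ingredients you propose cannot produce any exponent beyond $4/3$, let alone $4/3+1/753$. Your $\alpha$-parametrized constraints coming from Theorem~\ref{main} and Theorem~\ref{sum} are, as you note, at most $|A|^{13/10}$ in the balanced range $\alpha\approx 1/2$ (indeed for all $4/9<\alpha<5/9$ both exponents $8/5-3\alpha/5$ and $1+3\alpha/5$ drop below $4/3$), while the ``off-the-shelf'' sum--product input you want to couple with (Solymosi's $E^{\times}(A)\lesssim |A+A|^2$, or the Konyagin--Shkredov/Rudnev--Shkredov--Stevens refinements applied as black boxes to $A$, $X$ or $Y$) gives a constraint on $M$ that contains no reference to $\alpha$, i.e.\ to $d^+(X)$ or $d^{\times}(Y)$. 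A linear program in which one family of constraints never exceeds $13/10$ near $\alpha=1/2$ and the other family is constant in $\alpha$ optimizes to the maximum of the two, namely $4/3$ (or $4/3+1/1509$ if you cite \cite{RSS} as a black box, which is still weaker than the claimed $1/753$). There is no mechanism in your plan that makes the sum--product constraint bind together with the decomposition constraints, so the ``interaction'' from which you hope $1/753$ will fall out does not exist.

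The paper's proof is of a different nature and does not use Theorem~\ref{main} at all: Theorem~\ref{random} is proved inside the proof of Theorem~\ref{sumprod}, by running the Konyagin--Shkredov refinement of Solymosi's argument and replacing, at the very last step, the first inequality of Theorem~\ref{sum} by the second. Concretely, one assumes all four set sizes are at most $r|A|^{4/3}$, passes to a popular quotient level set $S_t$ with $E^{\times}(A)\sim |S_t|t^2$, and applies the clustering bound \eqref{cluster}; the error term $\sigma(A_{\lambda_1},A_{\lambda_2},A_{\lambda_3})$ is controlled by the fourth-moment quantity $d_4^+$ via Proposition~\ref{first}. Either the clustering parameter $M$ can be taken large and one beats Solymosi directly, or one is in the hard case where $d_4^+(A_{\lambda})\gtrsim t r^{-24}$ for the fibers $A_{\lambda}=A\cap\lambda A$; then Proposition~\ref{second} (the fourth-energy ``few sums, many products'' decomposition) forces $|A_{\lambda}A_{\lambda}|\gtrsim t^2r^{-24}$, Katz--Koester inclusion places $S_t$ into a set of popular quotients of $AA$, a popularity argument produces $A'=aS_t\cap A$, and \eqref{key} bounds $d^+(A')$; only now is the difference-set bound $|A-A|\geq |A'-A'|\gtrsim |A'|^{8/5}d^+(A')^{-3/5}$ from Theorem~\ref{sum} invoked, and the bookkeeping yields $1/753$ in place of the $5/5277$ obtained with the sumset bound. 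The essential point your proposal misses is that the gain beyond $4/3$ comes from the additive structure of the fibers $A_{\lambda}$ detected inside Solymosi's argument, not from a global decomposition of $A$; without the clustering inequality, Propositions~\ref{first} and~\ref{second}, Katz--Koester inclusion and \eqref{key}, the stated exponent is out of reach.
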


The work in this paper builds directly upon the works in \cite{BW, El,KR, MRS1, RSS, SS, Sh, Shdstar, Shadd, So}. It is worth noting that there are orthogonal works addressing the sum--product phenomenon. Chang \cite{Ch} and Bourgain and Chang \cite{BC} have developed interesting techniques from harmonic analysis. See Croot and Hart \cite{CH} and the references within for another perspective of the problem.

\section{Main decomposition result}

We recall that the proof of Theorem \ref{bw} \cite{BW} required two ingredients: a way to say if $A$ has additive structure then there is a large subset without multiplicative structure and a simple lemma to understand how multiplicative energy interacts with unions. They then concluded the proof with an iterative argument. We adopt a similar strategy and begin with the former. To begin, we need another definition.

\begin{defn}\label{Dstar}
We define the quantity 
$D^{\times}(A)$ to be the infimum of $$|Q|^2 |R|^2 |A|^{-1} t^{-3},$$ such that  
$$A \subset \{x : r_{Q / R}(x) \geq t\}, \ \ 1 \leq t \leq |Q|^{1/2} |R||A|^{-1/2} , \ \ |R| \leq |Q|.$$ 
We similarly define $D^{+}(A)$ to be the infimum of $|Q|^2 |R|^2 |A|^{-1} t^{-3}$ such that 
$$A \subset  \{x : r_{Q - R}(x) \geq t\} , \ \  1 \leq t \leq |Q|^{1/2} |R||A|^{-1/2} , \ \ |R| \leq |Q| .$$ 
\end{defn}
Thus $D^{\times}(A)$ is small if we can efficiently place $A$ into a set of popular quotients. The admittedly strange quantity $|Q|^2 |R|^2 |A|^{-1} t^{-3}$ is chosen in light of \eqref{key} below.
To understand $D^{\times}(A)$ a bit better, note that taking $Q = AB$, $R = B$ and $t = |B|$ for any $B$ finite, nonempty and not containing zero, one finds
 \begin{equation}\label{ineq}D^{\times}(A) \leq \frac{|AB|^2 }{|A||B|}.\end{equation}
 Thus $D^{\times}(A) \leq |A|$ ($|B| = 1$) and is smaller when $|AA|$ is significantly smaller than $|A|^{3/2}$.

The sole reason for introducing these quantities is the following proposition that relates $D^{\times}(A)$ to $d^+(A)$, as defined in Definition \ref{dplus}.

\begin{prop}[Lemma 13 in \cite{KS2}] Let $A \subset \mathbb{R}$ be finite. Then \begin{equation}\label{key} d^{+}(A) \lesssim D^{\times}(A), \end{equation} $$d^{\times}(A) \lesssim D^{+}(A) .$$
\end{prop}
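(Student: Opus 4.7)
The plan is to reduce the inequality $d^{+}(A) \lesssim D^{\times}(A)$ to the level-set characterization of $\widetilde{d}^{+}$ from Remark~\ref{op}, and then to extract the bound from the Szemer\'edi--Trotter theorem applied to an incidence configuration that encodes both the multiplicative popularity defining $D^{\times}(A)$ and the additive popularity defining $\widetilde{d}^{+}(A)$.

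First I would fix a finite nonempty $B \subset \mathbb{R}$ and $\tau \geq 1$, and set $P_\tau := \{x : r_{A-B}(x) \geq \tau\}$; by Remark~\ref{op} it suffices to show $|P_\tau| \lesssim D^{\times}(A)\,|A||B|^{2}\tau^{-3}$. Next I would pick a triple $(Q,R,t)$ almost realising the infimum in Definition~\ref{Dstar}, so that every $a \in A$ satisfies $r_{Q/R}(a) \geq t$ with $|R|\leq|Q|$ and $1\leq t\leq |Q|^{1/2}|R||A|^{-1/2}$. Then I would count
$$N := \#\{(a,b,q,r)\in A\times B\times Q\times R : a-b \in P_\tau,\ q=ar\}.$$
Each $x\in P_\tau$ has $\geq \tau$ representations $x=a-b$, and each $a \in A$ has $\geq t$ representations $a=q/r$, giving the lower bound $N\geq t\tau|P_\tau|$.

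For the matching upper bound, reparametrize the $4$-tuple by $(b,q,x,r)$ with $x=a-b$; this drops the constraint $a\in A$ and recasts $q=ar$ as $q=r(b+x)$, the equation of a line in the $(b,q)$-plane with slope $r$ and $y$-intercept $rx$. After discarding the at-most-one zero from $R$, distinct pairs $(x,r)\in P_\tau\times R$ determine distinct lines, so Theorem~\ref{ST} applied with $\mathcal{P}=B\times Q$ and the line set $\mathcal{L}=\{\ell_{x,r}\}$ gives
$$t\tau|P_\tau|\ \leq\ N\ \lesssim\ (|B||Q|)^{2/3}(|P_\tau||R|)^{2/3}+|B||Q|+|P_\tau||R|.$$
Isolating $|P_\tau|$ from the first term yields $|P_\tau|\lesssim|B|^{2}|Q|^{2}|R|^{2}t^{-3}\tau^{-3}$, which is exactly $\frac{|Q|^{2}|R|^{2}}{|A|t^{3}}\cdot |A||B|^{2}\tau^{-3}$ and hence $\lesssim D^{\times}(A)\,|A||B|^{2}\tau^{-3}$ after passing to the infimum over $(Q,R,t)$. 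The $|P_\tau||R|$ term is absorbed via $|R|\leq|Q|$ together with the trivial bounds $|P_\tau|\tau \leq |A||B|$ and $\tau\leq|B|$; the $|B||Q|$ term is absorbed via the rearrangement $t^{2}|A|\leq|Q||R|^{2}$ of the upper bound on $t$, together with $\tau^{2}\leq|A||B|$.

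The multiplicative statement $d^{\times}(A)\lesssim D^{+}(A)$ follows by the exactly analogous construction, replacing $a=q/r$ by $a=q-r$ and the level set of $r_{A-B}$ by that of $r_{A/B}$. The relation $q=ar$ is then replaced by $q=xb+r$, still a line in the $(b,q)$-plane (now of slope $x$ and intercept $r$), and distinct $(x,r)$ continue to determine distinct lines, so Szemer\'edi--Trotter applies with the identical numerology. The principal technical obstacle is managing the case analysis for the two subdominant Szemer\'edi--Trotter terms; the somewhat strange restriction $t\leq |Q|^{1/2}|R||A|^{-1/2}$ in Definition~\ref{Dstar} is chosen precisely so that the main term dominates across the range of $(Q,R,t,\tau)$ that matters.
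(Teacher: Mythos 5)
Your argument is correct and is essentially the paper's own (which is only sketched here, deferring to \cite[Lemma 13]{KS2}): you reduce to the level sets of $r_{A-B}$ via Remark \ref{op}, lower bound the incidence count by $t\tau|P_\tau|$, and apply Theorem \ref{ST} with the same main-term numerology, merely in the dual arrangement (points $B\times Q$, lines indexed by $P_\tau\times R$) of the paper's configuration (points $Q\times\{x: r_{A-B}(x)\geq\tau\}$, lines $y=\frac{x}{r}-b$ indexed by $R\times B$). One small bookkeeping correction: to absorb the $|P_\tau||R|$ term you also need the constraint from Definition \ref{Dstar} relating $t$ to $|Q|,|R|,|A|$ (e.g.\ $t|A|\leq|Q||R|$, or $t^{2}|A|\leq|Q||R|^{2}$), not only $|R|\leq|Q|$, $|P_\tau|\tau\leq|A||B|$ and $\tau\leq|B|$; with that inequality, which is immediate from the definition, the case closes exactly as you intend.
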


In \cite{KS2}, they had a slightly different definition of $D^{+}(A), D^{\times}(A)$, replacing the condition $t \lesssim |Q|^{1/2} |R||A|^{-1/2}$ with $|A| \leq |Q|$. The condition we impose is weaker, but the proof of \eqref{key} works line for line as in \cite[Lemma 13]{KS2}. 

To better understand \eqref{key}, one can check that \eqref{triv}, \eqref{ineq} and \eqref{key} together imply Elekes' \cite{El} bound $|A|^{5/2} \lesssim |AA||A+A|$. 

We briefly summarize the proof of \eqref{key} as it plays a crucial role in what lies below. Consider the following sets of points and lines: $$P = Q \times \{x : r_{A - B}(x) \geq \tau\}, \ \ \ L = \{y = \frac{x}{r} - b : r \in R , \ b \in B\}.$$ The number of incidences is at least $$t \tau \#\{ x : r_{A-B}(x) \geq \tau\}.$$ Then \eqref{key} follows from applying Theorem \ref{ST} (Szemer\'edi--Trotter) and a modest calculation. Thus $D^{\times}(A)$ allows us to efficiently transform the equation $y = a-b$ into $y = \frac{q}{r} - b$ which is better suited for Szemer\'edi--Trotter.

There has been a variety of notational choices for these quantities, but we made our choice for the reason that we wanted the quantity with a capital letter to be larger than the one with a lower case letter. Note that \eqref{key} is the only thing we use that relates addition and multiplication and what follows is massaging this inequality for our purposes. We remark that a symmetric version of the following lemma holds with the roles of $d^+$ and $d^{\times}$ reversed.

\begin{lemma}\label{lem1} Let $ T \subset \mathbb{R}$ be a finite, nonempty set. Then there exists a nonempty $A' \subset T$ such that $$d^{\times}(T) d^+(A') \lesssim \frac{|A'|^2}{|T|}, \ \ \ \ \ |A'| \gtrsim d^{\times}(T).$$
\end{lemma}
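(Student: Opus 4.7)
The plan is to exploit inequality \eqref{key}, which bounds $d^+(A')$ in terms of $D^\times(A')$, by realizing a suitable subset $A' \subset T$ as a popular level set of quotients whose defining data witnesses $d^\times(T)$. Write $\Delta := d^\times(T)$. Using the level-set formulation of $d^\times$ (the multiplicative analog of Remark \ref{op}) together with a dyadic decomposition of the third-moment energy $E_3^\times(T,B)$, I first select a finite $B \subset \mathbb{R} \setminus \{0\}$ and a scale $\tau \geq 1$ such that the level set $P := \{x : r_{T/B}(x) \geq \tau\}$ satisfies $|P|\,\tau^3 \gtrsim \Delta\,|T|\,|B|^2$. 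Discarding $0$ from $T$ if necessary ensures that the quotients involved are well defined and does not affect the conclusion.

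Next I build $A'$ by pigeonholing elements of $T$ according to how often they factor through $P$. For $t \in T$ set $f(t) := \#\{b \in B : t/b \in P\}$. Swapping the order of summation yields $\sum_{t \in T} f(t) = \sum_{x \in P} r_{T/B}(x) \geq |P|\tau$, and a dyadic pigeonhole on $f$ (whose range is $[0,|B|]$) produces $\sigma \in [1,|B|]$ and a dyadic slice $A' := \{t \in T : \sigma \leq f(t) < 2\sigma\}$ with $|A'|\,\sigma \gtrsim |P|\tau$. By construction $A' \subset \{t : r_{P/B^{-1}}(t) \geq \sigma\}$, so Definition \ref{Dstar} applied with $Q = P$ and $R = B^{-1}$ yields
\[
D^\times(A') \leq \frac{|P|^2\,|B|^2}{|A'|\,\sigma^{3}},
\]
and \eqref{key} upgrades this to $d^+(A') \lesssim D^\times(A')$.

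Substituting $\sigma^{-3} \lesssim |A'|^3/(|P|^3\tau^3)$ (from $|A'|\sigma \gtrsim |P|\tau$) and then $|P|\tau^3 \gtrsim \Delta|T||B|^2$ telescopes the chain of inequalities into $d^+(A') \lesssim |A'|^2/(\Delta|T|)$, which is the first claimed inequality. For the size lower bound, the trivial estimate $f(t) \leq |B|$ forces $\sigma \leq |B|$, so $|A'| \gtrsim |P|\tau/|B| \gtrsim \Delta|T||B|/\tau^2$; the pointwise estimate $\tau \leq r_{T/B}(x) \leq \min(|T|,|B|)$ gives $\tau^2 \leq |T||B|$, hence $|A'| \gtrsim \Delta$.

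The main obstacle I anticipate is verifying the technical side constraints imposed in Definition \ref{Dstar} — specifically $\sigma \leq |P|^{1/2}|B|\,|A'|^{-1/2}$ and $|B^{-1}| \leq |P|$ — which are not automatic from the parameter choices above. I expect to handle these by case analysis on the relative sizes of $|T|$, $|B|$, $|P|$, $\sigma$, and $\tau$: when $|P| < |B|$ one can swap the roles of $Q$ and $R$ in Definition \ref{Dstar}, and the upper bound on $\sigma$ can be enforced by restricting the dyadic pigeonhole to admissible scales, absorbing the resulting loss into the $\lesssim$ notation.
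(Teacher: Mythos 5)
Your proposal is essentially the paper's own argument: pick a near-optimal $B$ witnessing $d^{\times}(T)$, pass by dyadic pigeonholing to a popular quotient set $P$, pigeonhole $T$ by the multiplicity $f(t)$ of representations $t=pb$ to get $A'$, bound $D^{\times}(A')$ via Definition \ref{Dstar}, and convert with \eqref{key}; your telescoping and the size bound $|A'|\gtrsim d^{\times}(T)$ (via $\sigma\le|B|$, $\tau^2\le|T||B|$) match the paper's computation (which uses $\Delta\le|B|$, $q\le\min\{|P|,|B|\}$, $\Delta^2q\le|T||B|^2$). The obstacle you flag in fact closes with bounds you already have, so it is not a real gap: since every $t\in A'$ is nonzero, $f(t)\le\min\{|P|,|B|\}$, hence $\sigma\le\min\{|P|,|B|\}$, while $\sigma|A'|\le\sum_{x\in P}r_{T/B}(x)\le|P||B|$, so $\sigma^2|A'|\le\max\{|P|,|B|\}\,\min\{|P|,|B|\}^2$, which is exactly the constraint $\sigma\le|Q|^{1/2}|R||A'|^{-1/2}$ once $Q$ is the larger set; in particular no ``restriction to admissible scales'' is needed, every dyadic scale is admissible. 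For the condition $|R|\le|Q|$, do not literally swap $Q$ and $R$ (note $r_{B^{-1}/P}\ne r_{P/B^{-1}}$); when $|B|\ge|P|$ take $Q=B$, $R=P^{-1}$, for which $r_{B/P^{-1}}(t)=f(t)$ again and the symmetric quantity $|Q|^2|R|^2|A'|^{-1}\sigma^{-3}$ is unchanged. The only structural difference from the paper is that it takes $P$ to be a two-sided dyadic slice, giving $|A'|q\sim|P|\Delta$ in both directions, which makes this verification a one-line check; your one-sided superlevel set suffices as shown above.
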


If one had that $A' = T$, then Lemma \ref{lem1} would immediately imply Theorem \ref{main}, but this is unfortunately too strong to hope for. We first sketch the main idea of the proof. If $d^{\times}(T)$ is large, then there is a large subset $A' \subset T$ with small $D^{\times}(A')$. This is believable as both quantities are defined  via multiplication. Then we finish by applying \eqref{key} to turn this into additive information about $A'$. 
\begin{proof}
By Definition \ref{dplus} of $d^{\times}(T)$, there is a nonempty $B \subset \mathbb{R}$ such that $$d^{\times}(T) \sim \frac{E_3^{\times}(T , B)}{|T| |B|^2}.$$ Then by the definition of $E_3^{\times}(T,B)$ and a dyadic decomposition, there exists a $\Delta \geq 1$ such that $$E_3^{\times}(T, B) \sim |P| \Delta^3,\ \ \ P = \{x : \Delta \leq r_{T / B} (x)  \leq 2 \Delta \}.$$ 
By another dyadic decomposition, there $q \geq 1$ such that 
$$|P| \Delta \sim \sum_{x \in P} r_{T/B}(x) = \sum_{a \in T} r_{B/P^{-1}}(a) \sim |A'| q, \ \ A' = \{ a ' \in T : q \leq r_{B/P^{-1}}(a') \leq 2q\}.$$
From Definition \ref{Dstar}, we then have $$D^{\times}(A') \lesssim |B|^2 |P|^2 q^{-3} |A'|^{-1},$$ provided that $$q \lesssim |A'|^{-1/2} {\rm max} \{ |P| , |B|\}^{1/2} {\rm min}\{|P| , |B|\}.$$
 Since $|A'| q \sim \Delta |P|$, it is enough to verify
 $$|P| \Delta q \lesssim {\rm max} \{ |P| , |B|\} {\rm min}\{|P| , |B|\}^2.$$ 
This follows from $\Delta \leq |B|$ and $q \leq \min\{ |B| , |P|\}$. Then by \eqref{key}, we find $$d^{+}(A') \lesssim D^{\times}(A') \lesssim  |B|^2 |P|^2 q^{-3} |A'|^{-1} \lesssim  |B|^2 |A'|^2 \Delta^{-3} |P|^{-1} \lesssim \frac{|A'|^2}{d^{\times}(T) |T|}.$$
For the second inequality, we use $\Delta^2 q \leq |T| |B|^2$ to obtain $$|A'| \gtrsim |P| \Delta q^{-1} \sim E_3^{\times}(T,B) q^{-1} \Delta^{-2} \gtrsim d^{\times}(T).$$ 
\end{proof}

The referee observed that Lemma~\ref{lem1} is in a similar spirit to the Balog--Szemer\'edi--Gowers theorem \cite[Theorem 2.29]{TV} geared towards the sum--product problem (one should consider $d^{\times}(A) \geq |A| K^{-1}$ for some small $K \geq 1$). The difference is instead of concluding a large susbet with small product set as in Balog--Szemer\'edi--Gowers, we conclude the weaker condition that there is a large subset with no additive structure. Lemma~\ref{lem1} is quantitatively better since we are able to incorporate both addition and multiplication. 

We now move onto the easier ``union lemma." We remark that we avoid an application of H\"{o}lder's inequality, which appears in \cite{Shdstar}.

\begin{lemma}\label{lem2}
Let $A_1 , \ldots , A_K \subset \mathbb{R}$ be finite and disjoint. Then $$d^{+}(\bigcup_{j=1}^K A_j) \leq |\bigcup_{j=1}^K A_j|^{-1} \left( \sum_{j=1}^K d^{+}(A_j)^{1/3} |A_j|^{1/3}\right)^3.$$
Similarly, $$d^{\times}(\bigcup_{j=1}^K A_j) \leq |\bigcup_{j=1}^K A_j|^{-1} \left( \sum_{j=1}^K d^{\times}(A_j)^{1/3} |A_j|^{1/3}\right)^3.$$  
\end{lemma}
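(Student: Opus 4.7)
The plan is to work directly with the third--moment energy $E_3^+$ and take the supremum in $B$ only at the end. Write $A = \bigcup_{j=1}^K A_j$ and fix an arbitrary nonempty finite $B \subset \mathbb{R}$; by Definition~\ref{dplus}, it is enough to bound $E_3^+(A,B)/(|A||B|^2)$ by the right-hand side of the claimed inequality and then pass to the supremum over $B$.

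The only substantive observation is disjointness combined with Minkowski. Disjointness of the $A_j$ makes the representation function additive,
$$r_{A-B}(x) = \sum_{j=1}^K r_{A_j - B}(x),$$
so viewing each $r_{A_j-B}$ as a nonnegative function on $\mathbb{R}$ and applying the $\ell^3$ triangle inequality gives
$$E_3^+(A, B)^{1/3} = \Bigl\|\sum_{j=1}^K r_{A_j-B}\Bigr\|_3 \;\leq\; \sum_{j=1}^K \|r_{A_j - B}\|_3 \;=\; \sum_{j=1}^K E_3^+(A_j, B)^{1/3}.$$
This is precisely where the exponent $1/3$ in the statement comes from; it is also the step that lets us avoid the H\"older inequality used in \cite{Shdstar}, since Minkowski at exponent $3$ is exactly tight for this decomposition.

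Applying the definition of $d^+(A_j)$ to each summand, $E_3^+(A_j, B) \leq d^+(A_j)|A_j||B|^2$, then cubing the displayed inequality, dividing by $|A||B|^2$, and taking the supremum over $B$ yields the asserted bound on $d^+(A)$. The multiplicative statement is identical after replacing $r_{A-B}$ with $r_{A/B}$ throughout. There is no genuine obstacle; the only conceptual point is to notice that the third moment is fortunate here because $\ell^3$ is a normed space, so Minkowski's inequality produces the clean exponents directly, with no dependence on $K$ in the constant.
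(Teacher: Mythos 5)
Your argument is correct and is essentially the paper's proof: fix an arbitrary nonempty $B$, use disjointness to write $r_{A-B}=\sum_j r_{A_j-B}$, apply the triangle inequality in $\ell^3$, bound each term by $E_3^+(A_j,B)\leq d^+(A_j)|A_j||B|^2$, and take the supremum over $B$. No differences worth noting.
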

\begin{proof}
Let $B \subset \mathbb{R}$ be arbitrary and finite. Then by disjointness and the triangle inequality in $\ell^3(\mathbb{Z})$ , we have 
\begin{align*}E_3^+ (\bigcup_{j=1}^K A_j , B)^{1/3} & = \left(\sum_x r_{\bigcup_{j=1}^K A_j - B}(x)^3\right)^{1/3} =  \left(\sum_x  \left(\sum_{j=1}^K r_{ A_j - B}(x)\right)^3\right)^{1/3}  \\ & \leq \sum_{j=1}^K\left(  \sum_x r_{A_j -B}(x)^3 \right)^{1/3} = \sum_{j=1}^K E_3^+(A_j , B)^{1/3} .\end{align*} 
Recall Definition \ref{dplus} $$ d^{+}(A) = \sup_{B \neq \emptyset} \frac{E_3^+(A,B)}{|A||B|^2}.$$ Since $B$ was arbitrary, we may take the $B$ that maximizes the left hand side of the above equation, after dividing by $|B|^{2/3}$, and use $E_3^+(A_j , B) \leq d^+(A_j) |A_j| |B|^2$ on the right hand side to finish the proof. The proof of the second statement follows line by line to that of the first.
\end{proof}

We now iterate Lemma \ref{lem1} and prove Theorem \ref{main}. Set $A_0 = \emptyset$ and suppose that $A_0 , A_1 , \ldots , A_{j-1}$ have been defined. Put $T = A \setminus (A_0 \cup \ldots \cup A_{j-1})$ and define $A_j$ via Lemma \ref{lem1} as a nonempty set $A_j \subset T$ such that $$d^{\times}(T) d^{+}(A_j) \lesssim |A_j|^2 |T|^{-1}.$$

We continue this process until 

$$|A_1 \cup \ldots \cup A_K| \geq |A|/2.$$

This process must terminate for some finite $K \leq |A|/2$ since the $A_j$ are nonempty and disjoint. Set $Y = A \setminus (A_1 \cup \cdots \cup A_{K-1})$ and $X = A_1 \cup \cdots \cup A_K$. It is clear that $|X| \geq |A|/2$ and $|Y| \geq |A|/2$, otherwise the process would have stopped at step $K-1$. By Lemma \ref{lem1} and the monotonicity of $|A|d^{\times}(A)$, for $1 \leq j \leq K$, 
$$|Y|d^{\times}(Y) d^+(A_j) \leq |A \setminus (A_1 \cup \cdots \cup A_{j-1})|d^{\times}(A \setminus (A_1 \cup \cdots \cup A_{j-1})) d^+(A_j)  \lesssim |A_j|^2.$$ 
Combining with Lemma \ref{lem2},  we have \begin{align*} d^{+}(\bigcup_{j=1}^K A_j) & \leq |\bigcup_{j=1}^K A_j|^{-1} \left( \sum_{j=1}^K d^{+}(A_j)^{1/3} |A_j|^{1/3}\right)^3 \\ & \lesssim |X|^{-1} \left( \sum_{j=1}^K |A_j| \right)^3  |Y|^{-1} d^{\times}(Y)^{-1} \lesssim |X|^2 |A|^{-1} d^{\times}(Y)^{-1}. \end{align*} Theorem \ref{main} follows from $|X| \leq |A|$.

\section{Difference--quotient estimate and Balog--Wooley decomposition}

We start with Theorem \ref{diffquot}. It follows from this stronger proposition.

\begin{prop}\label{dq}
Let $A \subset \mathbb{R}$. Then $$|A-A||AA^{-1}| \gtrsim |A|^{13/5},$$
$$|A-A|^{35}|AA|^{37} \gtrsim |A|^{93}.$$
\end{prop}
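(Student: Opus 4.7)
The plan is to apply the main decomposition Theorem~\ref{main} to $A$, producing $X, Y \subset A$ with $X \cup Y = A$, $|X|, |Y| \geq |A|/2$, and $d^{+}(X)\, d^{\times}(Y) \lesssim |A|$. The point of this decomposition is precisely that $X$ has well-controlled additive third-moment behavior while $Y$ has well-controlled multiplicative third-moment behavior, so we can run the strong sum/difference set bounds of Theorem~\ref{sum} separately on the two pieces.

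For the first inequality, I would apply Theorem~\ref{sum} to $X$ in the difference-set form and its multiplicative analogue to $Y$ in the quotient-set form. Since $X - X \subset A - A$ and $YY^{-1} \subset AA^{-1}$, this yields
\begin{align*}
|A-A| &\gtrsim |X|^{8/5} d^{+}(X)^{-3/5} \gtrsim |A|^{8/5} d^{+}(X)^{-3/5}, \\
|AA^{-1}| &\gtrsim |Y|^{8/5} d^{\times}(Y)^{-3/5} \gtrsim |A|^{8/5} d^{\times}(Y)^{-3/5}.
\end{align*}
Multiplying and invoking $d^{+}(X) d^{\times}(Y) \lesssim |A|$ gives
$$|A-A||AA^{-1}| \gtrsim |A|^{16/5} \bigl(d^{+}(X) d^{\times}(Y)\bigr)^{-3/5} \gtrsim |A|^{16/5 - 3/5} = |A|^{13/5}.$$

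For the second inequality, the key observation is that the exponents $35$ and $37$ are tuned precisely to make the $d^{+}$ and $d^{\times}$ exponents match so Theorem~\ref{main} can be applied cleanly. I would keep the difference-set bound on $X$, but now use the sumset form of Theorem~\ref{sum} multiplicatively on $Y$:
$$|AA| \gtrsim |YY| \gtrsim |Y|^{58/37} d^{\times}(Y)^{-21/37} \gtrsim |A|^{58/37} d^{\times}(Y)^{-21/37}.$$
Raising the difference-set bound to the $35$th power and the product-set bound to the $37$th power, the $d$-exponents become $35 \cdot 3/5 = 21$ and $37 \cdot 21/37 = 21$, so they combine into a single power of $d^{+}(X) d^{\times}(Y)$:
$$|A-A|^{35} |AA|^{37} \gtrsim |A|^{56 + 58} \bigl(d^{+}(X) d^{\times}(Y)\bigr)^{-21} \gtrsim |A|^{114 - 21} = |A|^{93}.$$

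There is no real obstacle here beyond choosing the right balance of exponents; all of the heavy lifting has been packaged into Theorem~\ref{main} (which hides the Szemer\'edi--Trotter based relation $d^{+} \lesssim D^{\times}$ and the union lemma) and into Theorem~\ref{sum}. The mild technical point to keep in mind is that the multiplicative versions of Theorem~\ref{sum} used on $Y$ should be applied to the logarithm of the larger of the positive and negative parts of $Y$ as noted in the paper, which costs only a constant factor and is absorbed into $\gtrsim$.
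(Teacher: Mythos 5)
Your proposal is correct and is essentially the paper's own proof: the paper also applies Theorem~\ref{main} to get $X,Y$ with $d^{+}(X)d^{\times}(Y)\lesssim |A|$ and then uses the second statement of Theorem~\ref{sum} (in the rearranged form $d^{+}(X)\gtrsim |X|^{8/3}|X-X|^{-5/3}$, $d^{\times}(Y)\gtrsim |Y|^{8/3}|YY^{-1}|^{-5/3}$) for the first bound, and the first statement of Theorem~\ref{sum} in the form $d^{\times}(Y)\gtrsim |Y|^{58/21}|YY|^{-37/21}$ for the second. Your exponent bookkeeping ($35\cdot 3/5 = 37\cdot 21/37 = 21$, giving $|A|^{114-21}=|A|^{93}$) matches the paper's computation exactly, just presented as a direct product of the two lower bounds rather than as a single combined inequality.
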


\begin{proof}
By Theorem \ref{main}, there exist $X,Y \subset A$ such that $|X|, |Y| \geq |A|/2$ and $$d^+(X)d^{\times}(Y) \lesssim |A|.$$ By the second statement of Theorem \ref{sum}, $$d^+(X) \gtrsim |X|^{8/3} |X-X|^{-5/3} , \ \ \ \ \ d^{\times}(Y) \gtrsim |Y|^{8/3} |YY^{-1}|^{-5/3}.$$ Combining these, we get $$\frac{|A|^{16/3}}{|A-A|^{5/3} |AA^{-1}|^{5/3}} \lesssim \frac{|X|^{8/3}|Y|^{8/3}}{|X-X|^{5/3} |YY^{-1}|^{5/3}} \lesssim |A|.$$ The only difference in the proof of the second statement is we use the first statement of Theorem \ref{sum} in the form $$d^{\times}(Y) \gtrsim |Y|^{58/21} |YY|^{-37 / 21},$$ in place of $d^{\times}(Y) \gtrsim |Y|^{8/3} |YY^{-1}|^{-5/3}$.

\end{proof}

We now prove Theorem \ref{decomp}, which we restate for the reader's convenience. 

\decomp*

The proof can be summarized as iterating Theorem \ref{main} at most logarithmically many times, and then applying the third statement of Theorem \ref{sum} for the first inequality and Cauchy--Schwarz for the second.
\begin{proof}
By Theorem \ref{main}, there exists $A_1$ such that $|A_1| \geq |A|/2$ and $d^+(A_1) \lesssim |A|^{1/2}$ or $d^{\times}(A_1) \lesssim |A|^{1/2}$. Similarly, suppose $A_1, \ldots , A_{K-1}$ are defined. Then by Theorem \ref{main}, there exists $A_K \subset A \setminus (A_1 \cup \cdots \cup A_{K-1})$ such that $|A_K| \geq |A \setminus (A_1 \cup \cdots \cup A_{K-1})|/2$ and $d^+(A_K) \lesssim |A|^{1/2}$ or $d^{\times}(A_K) \lesssim |A|^{1/2}$.

Continue this process until $|A_K| \leq |A|^{1/2}$, since then we trivially have that $d^+(A_K) \leq |A|^{1/2}$. By size considerations, this process will terminate in $\leq \log |A|$ steps.

Let $S \subset \{1 , \ldots , K\}$ be the set of indices $j$ such that $d^+(A_j) \lesssim |A|^{1/2}$ and $P$ be the remaining indices and set $$B = \bigcup_{j \in S} A_j, \ \ \ \ C = \bigcup_{j \in P}A_j.$$  
Then by Lemma \ref{lem2} and H\"{o}lder's inequality, since $|S| \leq \log |A|$ and $d^+(A_j) \lesssim |A|^{1/2}$, we find \begin{align*}d^{+}(B) = d^{+}(\mathop{\cup}_{j \in S} A_j) & \lesssim  |\mathop{\cup}_{j \in S} A_j|^{-1} \left( \sum_{j \in S} |A_j|^{1/3} d^+(A_j)^{1/3} \right)^3 \\ & \lesssim |\mathop{\cup}_{j \in S} A_j|^{-1} \sum_{j \in S} |A_j| d^+(A_j) \lesssim |A|^{1/2} .
\end{align*} 
Similarly $d^{\times}(C) \lesssim |A|^{1/2}$. 

To conclude the first inequality in the second statement, note by the third inequality of Theorem \ref{sum}, we have $$E^{+}(B) \lesssim d^{+}(B)^{\frac{7}{13}} |B|^{\frac{32}{13}} \lesssim |A|^{3 - 7/26},$$ and similarly $E^{\times}(C) \lesssim |A|^{3 - 7/26}$.

For the second inequality in the second statement, we apply Cauchy--Schwarz to obtain
\begin{align*}
E^+(B,A) & \leq E_3^+(B,A)^{1/2} \left(\sum_x r_{A-B}(x) \right)^{1/2} \\
& \leq (d^+(B)|B||A|^2)^{1/2} (|A||B|)^{1/2}\\
& \lesssim |A|^{1/4} |A|^{5/2}.
\end{align*}
Similarly $E^{\times}(C,A) \lesssim |A|^{11/4}$.
\end{proof}

\section{Expander inequalities}

We use Theorem \ref{main} and the techniques of \cite{MRS} to establish the three inequalities of Theorem \ref{expander}. We first recall the two lemmas from their paper that we use.

\begin{lemma}\label{one} [\cite[Lemma 8]{MRS}] Let $A , B \subset \mathbb{R}$ be finite and nonempty such that $|A| \sim |B|$. Then there exists $b \in B$ such that $$|A|^6 \lesssim |A(A+b)|^2 E^{\times}(A).$$
\end{lemma}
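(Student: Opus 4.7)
The plan is to combine two applications of Cauchy--Schwarz with an averaging argument over $b \in B$.

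First I would apply Cauchy--Schwarz to the multiplication map $A \times (A+b) \to A(A+b)$ to obtain
\[
|A|^4 = \Big(\sum_x r_{A(A+b)}(x)\Big)^2 \le |A(A+b)| \cdot N(b),
\]
where $N(b) := \sum_x r_{A(A+b)}(x)^2 = \#\{(a_1,a_2,a_3,a_4) \in A^4 : a_1(a_2+b) = a_3(a_4+b)\}$. Rewriting the defining equation as $a_1/a_3 = (a_4+b)/(a_2+b)$ and grouping by the common ratio $\lambda$,
\[
N(b) = \sum_{\lambda} r_{A/A}(\lambda)\, r_{(A+b)/(A+b)}(\lambda) \le \sqrt{E^{\times}(A)\cdot E^{\times}(A+b)}
\]
by a second Cauchy--Schwarz. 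Combining the two displayed inequalities shows that the desired bound $|A|^6 \lesssim |A(A+b)|^2 E^{\times}(A)$ follows once we find $b \in B$ with $E^{\times}(A+b) \lesssim |A|^2$.

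Next I would estimate $\sum_{b \in B} E^{\times}(A+b)$ by expanding $(a_1+b)(a_2+b) = (a_3+b)(a_4+b)$ into $a_1 a_2 - a_3 a_4 = b(a_3+a_4-a_1-a_2)$. The contribution from quadruples with $a_1 + a_2 = a_3 + a_4$ forces $\{a_1, a_2\} = \{a_3, a_4\}$ (the roots of a common quadratic) and is at most $O(|A|^2|B|)$, while the generic case uniquely determines $b$ and contributes $O(|A|^4)$; using $|A| \sim |B|$ this gives $\sum_{b \in B} E^{\times}(A+b) \lesssim |A|^4$.

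The hard part will be that the naive averaging only yields $\min_{b \in B} E^{\times}(A+b) \lesssim |A|^3$, which is a factor of $|A|$ short of the $|A|^2$ bound required above. To close this gap I expect to apply the Szemer\'edi--Trotter theorem (Theorem~\ref{ST}) to the point set $P = \{(a+a', aa') : a, a' \in A\}$ of size $\lesssim |A|^2$, against the family of lines of slope $-b$ for $b \in B$: indeed, the non-diagonal contribution to $\sum_b E^{\times}(A+b)$ is exactly the number of pairs of points in $P$ whose connecting line has slope in $-B$. A dyadic decomposition on the line richness, exploiting $|B| \sim |A|$ and the additional geometric structure of $P$ (its points lie on the discriminant locus $s^2 \geq 4p$), should concentrate most of this count on a small number of $b$'s and so produce some $b \in B$ with $E^{\times}(A+b) \lesssim |A|^2$, completing the argument.
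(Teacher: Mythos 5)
Your two Cauchy--Schwarz steps are fine: $|A|^4 \le |A(A+b)|\,N(b)$ with $N(b)=\sum_\lambda r_{A/A}(\lambda)\,r_{(A+b)/(A+b)}(\lambda)\le E^{\times}(A)^{1/2}E^{\times}(A+b)^{1/2}$ (up to degenerate solutions involving $0$, which are lower order), so the lemma would indeed follow if you could produce $b\in B$ with $E^{\times}(A+b)\lesssim |A|^2$. The genuine gap is that this last statement is far stronger than anything currently provable --- it is essentially an open problem, and may well be false. By Cauchy--Schwarz it would give a translate with $|(A+b)(A+b)|\gtrsim |A|^2$, i.e.\ a near-maximal product set for a single additive shift, whereas all known bounds for shifted products (including the very theorems in this paper that Lemma~\ref{one} is used to prove, such as $\max_{a}|A(A\pm a)|\gtrsim |A|^{3/2+1/182}$) sit near the exponent $3/2$. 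Even the far weaker question of whether $\min\{E^{\times}(A),E^{\times}(A+1)\}\lesssim |A|^{3-c}$ for a respectable $c$ is a well-known hard problem; demanding that one of $\sim|A|$ shifts attain the absolute minimum $\sim|A|^2$ is beyond current technology, and nothing in the literature, including the cited source \cite[Lemma 8]{MRS}, supplies such a per-shift bound.

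The proposed patch cannot close this factor-of-$|A|$ gap. Your bound $\sum_{b\in B}E^{\times}(A+b)\lesssim |A|^4+|A|^2|B|$ already encodes all the information in the incidence reformulation: the statement ``a non-degenerate quadruple determines $b$'' is exactly the statement ``a pair of distinct points of $P=\{(a+a',aa')\}$ determines the slope of its joining line,'' so recasting the count as incidences between $P$ and lines with slopes in $-B$ adds nothing, and Szemer\'edi--Trotter with $|P|\sim |A|^2$ points and $\sim|A|$ admissible slopes does not beat this trivial count (in particular it yields no per-$b$ bound better than the averaged $|A|^3$). The logic of the final step is also inverted: to extract a good $b$ you must \emph{exclude} the evenly spread scenario $E^{\times}(A+b)\sim |A|^3$ for every $b\in B$, and neither a dyadic decomposition on line richness nor the constraint $s^2\ge 4p$ on $P$ gives any mechanism for doing so --- ruling out that scenario is precisely the missing (and hard) content. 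So after the correct reduction, the argument stops a full power of $|A|$ short of the lemma, and the cited proof of \cite[Lemma 8]{MRS} must be supplied by a genuinely different route rather than by minimizing $E^{\times}(A+b)$ over $b$.
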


\begin{lemma}\label{two}[\cite[Lemma 11]{MRS}] Let $A \subset \mathbb{R}$ be finite and nonempty. Then for all nonzero $\alpha \in \mathbb{R}$,  $$E^{\times}(A)^2\lesssim |A(A+ \alpha)|  |A|^{\frac{58}{13}} d^{+}(A)^{\frac{7}{13}}  .$$
\end{lemma}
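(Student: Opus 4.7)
The LHS of the inequality is purely multiplicative while the RHS mixes the multiplicative expander $|A(A+\alpha)|$ with the additively-flavored quantity $d^+(A)$, so any proof must bridge the two via an incidence argument. The exponent $7/13$ on $d^+(A)$ and the factorization $|A|^{58/13} = |A|^2 \cdot |A|^{32/13}$ strongly suggest that the final step invokes Theorem \ref{sum}'s bound $E^+(A) \lesssim d^+(A)^{7/13} |A|^{32/13}$.

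The starting point would be the algebraic identity
$$a(b+\alpha) - c(d+\alpha) = (ab - cd) + \alpha(a-c),$$
so that every quadruple $(a,b,c,d) \in A^4$ with $ab = cd$, i.e.\ contributing to $E^{\times}(A)$, produces $x := a(b+\alpha) \in T$ and $y := c(d+\alpha) \in T$ satisfying $x - y = \alpha(a-c)$, where $T := A(A+\alpha)$. Since the map $(a,b,c,d) \mapsto (a,c,x,y)$ is essentially injective (knowing $a,c,x,y$ forces $b,d$ via $b = x/a - \alpha$, $d = y/c - \alpha$), this yields the bilinear reduction
$$E^{\times}(A) \lesssim \sum_{s \in A-A} r_{A-A}(s)\, r_{T-T}(\alpha s).$$

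From here I would apply Cauchy--Schwarz after a dyadic pigeonhole in the level sets $D_\Delta := \{s : r_{A-A}(s) \sim \Delta\}$. The size $|D_\Delta|$ is controlled by the operator-norm characterization of $d^+(A)$ in Remark \ref{op} via $|D_\Delta| \lesssim d^+(A)|A|^3\Delta^{-3}$, and Cauchy--Schwarz converts the inner sum into a second-moment estimate on $r_{T-T}(\alpha s)$, which I would attack through Szemer\'{e}di--Trotter applied to a configuration of points from $T$ and lines parametrized by $A$ via the shift $\alpha$ (playing the role that Szemer\'{e}di--Trotter typically plays in third-moment sum-product estimates). Balancing the resulting expressions against $E^+(A) \lesssim d^+(A)^{7/13}|A|^{32/13}$ from Theorem \ref{sum} should produce the combination $|T| \cdot |A|^{58/13} d^+(A)^{7/13}$.

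\paragraph{Main obstacle.} The hard part is arranging the Cauchy--Schwarz and Szemer\'{e}di--Trotter steps so that the exponents balance cleanly to $|A|^{58/13} d^+(A)^{7/13}$ without loss. Tracking dyadic levels in both the $r_{A-A}$ and $r_{T-T}$ distributions simultaneously while preserving the sharp third-moment bound is technically delicate, and the shift $\alpha$ breaks the scaling symmetry enjoyed by the unshifted case, so one must be careful about degenerate alignments between $A$ and $A+\alpha$. Since this is cited as Lemma 11 of \cite{MRS}, I anticipate that the authors there execute a close variant of this plan, likely achieving the sharp exponent through a particularly efficient choice of Cauchy--Schwarz that splits the contribution of $r_{T-T}$ across popular and unpopular differences.
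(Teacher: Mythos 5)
A preliminary point: the paper contains no proof of Lemma \ref{two} at all; it is imported from \cite[Lemma 11]{MRS}, with the remark that \cite{MRS} state it with $D^{\times}(A)$ in place of $d^{+}(A)$ and that the stated form follows from the same proof together with \eqref{key}. So the benchmark is the incidence argument of \cite{MRS}, which retains the multiplicative (quotient) structure of $A$ throughout. Your reading of the exponents is correct insofar as $|A|^{58/13}d^{+}(A)^{7/13}=|A|^{2}\cdot|A|^{32/13}d^{+}(A)^{7/13}$ and the last factor is the third bound of Theorem \ref{sum}; what has to be produced before that final substitution is an inequality of the shape $E^{\times}(A)^{2}\lesssim|A(A+\alpha)|\,|A|^{2}\,E^{+}(A)$ (in its $D^{\times}/d^{+}$ incarnation).

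The gap is that your plan cannot produce it. Your reduction is correct, but it is precisely the statement $E^{\times}(A)\leq\sum_{s}r_{A-A}(s)\,r_{T-T}(\alpha s)=E^{+}(T,\alpha A)$ with $T=A(A+\alpha)$: the constraint $x-y=\alpha(a-c)$ is the same as $x-\alpha a=y-\alpha c$, so your quadruple count collapses to $\sum_{u}r_{T-\alpha A}(u)^{2}$. From that point on the only multiplicative information left is the cardinality $|T|$, while the target bounds $E^{\times}(A)^{2}$ by a \emph{single} power of $|T|$. The tools you name cannot recover this: Cauchy--Schwarz together with $E_{3}^{+}(\alpha A,T)\leq d^{+}(A)|A||T|^{2}$ gives only $E^{\times}(A)^{2}\lesssim d^{+}(A)|A|^{2}|T|^{3}$, and the dyadic decomposition into level sets $D_{\Delta}$ with $|D_{\Delta}|\lesssim d^{+}(A)|A|^{3}\Delta^{-3}$, combined with $\sum_{s\in D_{\Delta}}r_{T-T}(\alpha s)\leq\min\{|D_{\Delta}||T|,\,|T|^{2}\}$ (one has no additive-energy information about $T$), gives $E^{\times}(A)^{2}\lesssim d^{+}(A)^{2/3}|A|^{2}|T|^{10/3}$; both are far weaker than the lemma in the regime $|T|\approx|A|^{3/2}$ where it is applied. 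Moreover the Szemer\'edi--Trotter step you envisage is vacuous in this formulation: the lines attached to the relation $x-y=\alpha(a-c)$ (for instance $y=\alpha c+(x-\alpha a)$ in the $(c,y)$-plane) form a parallel family of slope $\alpha$, and Theorem \ref{ST} gives nothing beyond the trivial count for parallel lines. A useful incidence configuration needs lines with many distinct slopes, i.e.\ it must keep the quotient structure (the sets $A\cap\lambda A$, or a popular-quotient representation of $A$ as in Definition \ref{Dstar}, which is how $D^{\times}(A)$ enters in \cite{MRS}) inside the incidence step --- exactly the information discarded by passing to $E^{+}(T,\alpha A)$. So the proposal correctly anticipates the final appeal to Theorem \ref{sum}, but the core of the lemma is left unproved and the sketched route would not supply it.
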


Note that the authors only claim Lemma \ref{two} with $D^{\times}(A)$ in place of $d^{+}(A)$ which is weaker in light of \eqref{key}. The bound we claim follows from the same proof, which the authors mention immediately following their proof of Lemma \ref{two}.

\begin{proof}[Proof of Theorem \ref{expander}]
Observe that Lemma \ref{one} is good when the multiplicative energy of $A$ is small and Lemma \ref{two} is good when the multiplicative energy of $A$ is large. We now plan to estimate $$\max_{a \in A} |A(A\pm a)| , |A(A\pm A)|  .$$ We can start all three proofs in the same way. 

\begin{lemma}\label{combine} Let $A\subset \mathbb{R}$ be finite and nonempty. Then there exist $a ,b \in A$ such that $$|A|^{46/13} \lesssim |A(A+a)|^2 d^{\times}(A)^{7/13} ,$$ $$|A|^{46/13} \lesssim |A(A-b)|^2 d^{\times}(A)^{7/13} .$$ Suppose further that $|A(A+a)| \leq r|A|^{3/2}$ for all $a \in A$. Then there is an $X \subset A$ of size at least $|A|/2$ such that  $$d^{+}(X) \lesssim r^{26/7}.$$
\end{lemma}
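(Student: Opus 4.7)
The plan has two parts. For the first two inequalities, the strategy is to combine Lemma~\ref{one} with the multiplicative analog of the third statement of Theorem~\ref{sum}, namely $E^{\times}(A) \lesssim d^{\times}(A)^{7/13}|A|^{32/13}$ (valid after the standard trick of passing to the logarithm of the larger of $A \cap \mathbb{R}^+$ and $-(A \cap \mathbb{R}^-)$ to handle signs and the element $0$, losing only constants). Applied to $A$ with $B = A$ in Lemma~\ref{one}, we obtain some $a \in A$ with $|A|^6 \lesssim |A(A+a)|^2 E^{\times}(A)$. Substituting the energy bound and using $6 - 32/13 = 46/13$ yields the first inequality. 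For the second, the plan is to apply Lemma~\ref{one} with $B = -A$; since $|-A| = |A|$ the hypothesis is satisfied, and this produces some $-a \in -A$ with $|A|^6 \lesssim |A(A-a)|^2 E^{\times}(A)$, giving the analog for $A(A-b)$ after the same substitution.

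For the third statement, the plan is to feed the first inequality into Theorem~\ref{main}. First apply Theorem~\ref{main} to produce $X, Y \subset A$ with $|X|, |Y| \geq |A|/2$ and $d^+(X) d^{\times}(Y) \lesssim |A|$. Then apply the first inequality (which holds for any finite set) to $Y$ in place of $A$, producing some $b \in Y$ with
\[ |Y|^{46/13} \lesssim |Y(Y+b)|^2 d^{\times}(Y)^{7/13}. \]
Since $Y \subset A$, we have $|Y(Y+b)| \leq |A(A+b)|$, and because $b \in Y \subset A$ the hypothesis gives $|A(A+b)| \leq r|A|^{3/2}$. Combined with $|Y| \geq |A|/2$, this yields
\[ |A|^{46/13} \lesssim r^2 |A|^3\, d^{\times}(Y)^{7/13}, \]
which rearranges (after raising to the power $13/7$) to $d^{\times}(Y) \gtrsim |A|\, r^{-26/7}$. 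Inserting this into $d^+(X) \lesssim |A|/d^{\times}(Y)$ from Theorem~\ref{main} gives $d^+(X) \lesssim r^{26/7}$, and $|X| \geq |A|/2$ is automatic.

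I do not anticipate a real obstacle beyond routine bookkeeping. The only minor subtleties are (i) ensuring the multiplicative analog of Theorem~\ref{sum} is applicable to the subset $Y$, which requires discarding the potential element $0 \in Y$ at a cost absorbed into $\lesssim$, and (ii) verifying that Lemma~\ref{one} with $B = -A$ actually produces an element of $A$ (after relabeling $b \mapsto -a$) satisfying the difference-set bound. Neither step is deep; the real content of the lemma is that passing from the sum-set hypothesis to $d^+(X)$ costs a clean power $r^{26/7}$, and this is dictated entirely by the exponents $46/13$ and $7/13$ inherited from Theorem~\ref{sum}.
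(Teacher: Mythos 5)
Your proposal is correct and follows essentially the same route as the paper: Lemma~\ref{one} (with $B=\pm A$) combined with the multiplicative form of the third inequality of Theorem~\ref{sum} gives the two energy--expander bounds, and the final statement comes from applying Theorem~\ref{main} and feeding the first inequality to the set $Y$, exactly as in the paper's (much terser) proof. The bookkeeping with exponents ($6-32/13=46/13$, $46/13-3=7/13$) and the sign/zero caveats you flag are handled the same way there, so there is nothing to add.
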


\begin{proof} By Lemma \ref{one}, there is an $a \in A$ such that $|A|^6 \lesssim |A(A\pm a)|^2 E^{\times}(A)$. Combining this with the third inequality of Theorem \ref{sum} and simplifying yields the first statement of the lemma. To obtain the second statement, let $X$ and $Y$ be as given by Theorem \ref{main}. To finish, apply the first statement to $Y$, use $d^{+}(X)d^{\times}(Y) \lesssim |A|$, and simplify.
\end{proof}

Now we investigate each expander separately.

(i)[$A(A-A) , A(A+A)]$ Suppose $|A(A\pm A)| \leq r |A|^{3/2}$. Since $A(A\pm a) \subset A(A \pm A)$ for all $a \in A$, we may apply Lemma \ref{combine} to obtain a set $X \subset A$ of size at least $|A|/2$ such that $d^+(X) \lesssim r^{26/7}$. Now, using $|A(A\pm A)| \geq |X \pm X|$ along with the first statement of Theorem \ref{sum} in the plus case and the second statement of Theorem \ref{sum} in the minus case and simplifying gives $r \gtrsim |A|^{1/46}$ and $r \gtrsim |A|^{7/226}$, respectively.

(ii)[$A(A\pm a)$] Suppose $$\max_{a \in A} |A(A\pm a)| \leq r |A|^{3/2}.$$ By Lemma \ref{combine}, there is an $X \subset A$ of size at least $|A|/2$ such that $d^+(X) \leq r^{26/7}$. On the other hand, by Lemma \ref{one} and Lemma \ref{two}, $$|A|^3 \lesssim E^{\times}(X) r^2, \ \ E^{\times}(X)^2 \lesssim r |A|^{3/2 + 58/13} d^{+}(X)^{7/13}.$$ Combining these and using $d^{+}(X) \lesssim r^{26/7}$ yields $r \gtrsim|A|^{1/182}$. \end{proof}

\section{Sum--product estimate}

We now proceed to prove Theorem \ref{sumprod}. The proof set--up is the same as in \cite{ KS, KS2}, which we now discuss. Let $A \subset \mathbb{R}$ be finite. Konyagin and Shkredov start with the geometric approach of Solymosi \cite{So}, and can improve upon it unless $A_{\lambda} : = A \cap \lambda A$ has additive structure for many choices of $\lambda$. They then prove an energy analog of a ``few sums, many products" result in \cite{ER} and use it to conclude that $|A_{\lambda} A_{\lambda}|$ is almost as big as possible. It turns out that these sets are relatively small  ($\approx |A|^{2/3}$) and this does not immediately improve Solymosi's \cite{So} exponent of 1/3 in Conjecture \ref{esconj}. Konyagin and Shkredov then use Katz--Koester \cite{KK} inclusion, that is $$A_{\lambda} A_{\lambda} \subset AA \cap \lambda AA,$$ as well as \eqref{key} and the first inequality of Theorem \ref{sum}, to also give an improvement in this case. In what remains, we quantitatively improve part of the argument and provide the entirety of the proof of \cite{KS2} to see how our new pieces fit in.

The ``few sums, many products" lemma was improved recently in \cite{RSS}. We interpret this improvement as a fourth order energy estimate which allows us to more efficiently apply the lemma. The work in \cite{RSS} relied on bounding the number of solutions to $$ \frac{p+b}{q + c} = \frac{p' + b'}{q' + c'} , \ \ \ p , q , p' , q' \in P  , \ \  b , c , b' , c' \in B,$$ which was addressed \cite{MRS1} while studying the expanders from Theorem \ref{expander}. It turns out, much like Szemer\'edi--Trotter is naturally a third moment estimate, their lemma is naturally a fourth moment estimate.

We use fourth order energy for the first time in the sum--product problem and define $$ E_4^{+}(A,B) = \sum_x r_{A-B}(x)^4 , \ \ \ \ \ \ \ \ E_4^{\times}(A,B) = \sum_x r_{A/B}(x)^4.$$ 
Similar to $d^{+}(A)$ as in Definition \ref{dplus}, we define $d_4^+(A)$.

\begin{defn}\label{d4} Let $A \subset \mathbb{R}$ be finite. Then we define $d_4^+(A)$ via $$d_4^+(A) := \sup_{B \neq \emptyset} \frac{E_4^+(A,B)}{ |A| |B|^3}.$$ \end{defn}

It is easy to see that one has $1 \leq d_4^+(A) \leq |A|$ and in fact we have $d_4^+(A) \leq d^+(A)$. So $d_4^+(A)$ is the fourth moment analog of $d^+(A)$. Note that the supremum in Definition \ref{d4} is obtained for some $|B| \leq |A|^{3/2}$, since $$\frac{E_4^+(A, B)}{|A| |B|^3} \leq \frac{|A|^3}{|B|^2}.$$Similar to Remark \ref{op}, we relate $d_4^+(A)$ to an operator norm. 
\begin{remark}
Consider the linear operator, as in Remark \ref{op},
\begin{equation}\label{operator} T_A (f):= \sum_x 1_A(x) f(y+x).\end{equation}
Then 
$$d_4^+(A) = \frac{1}{|A|} ||T_A||_{\ell^{4/3} \to \ell^4}^4 , \ \ \ d^+(A) =  \frac{1}{|A|} ||T_A||_{\ell^{3/2} \to \ell^3}^3 .$$
It is easy to see that $$||T_A||_{\ell_1 \to \ell_{\infty}} = |A| , \ \ \ |A|^{1/2} \leq ||T_A||_{\ell_2 \to \ell_2} \leq |A|.$$ A bound of the form $$||T_A||_{\ell_2 \to \ell_2} \lesssim |A|^{1/2},$$ together with interpolation with $\ell_1 \to \ell_{\infty}$ implies $d_4^+(A) \leq d^+(A) \lesssim 1$. Thus $\ell_2 \to \ell_2$ estimates are stronger, but higher moments are flexible to work with, as in Theorem \ref{main} above and Proposition \ref{second} below. 
\end{remark}

We now need the following quantity, which plays an important role in the Konyagin--Shkredov argument.
\begin{defn}
Let $A, B , C \subset \mathbb{R}$ be finite and define
$$\sigma(A,B,C) := \sup_{\sigma_1, \sigma_2 , \sigma_3 \neq 0} \#\{(a, b , c) \in A \times B \times C : \sigma_1a + \sigma_2 b + \sigma_3 c = 0\}.$$
\end{defn}

We have the trivial bound $\sigma (A , B , C) \leq |A| |B|$ and this is basically obtained when $A, B,C = \{1 , \ldots , n\}$. We expect that $\sigma (A, B , C)$ is small whenever $A, B$, or $C$ has little additive structure. Konyagin and Shkredov \cite{KS2} used that $$\sigma(A , B , C) \leq |A|^{1/2} E^+(B)^{1/4} E^+(C)^{1/4},$$ which we replace with the following.

\begin{prop}\label{first}
Let $A, B ,C \subset \mathbb{R}$ be finite. Then $$\sigma(A,B,C) \leq |C|^{3/4} \left( d_4^+(A) |A| |B|^3  \right)^{1/4}$$
\end{prop}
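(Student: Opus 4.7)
The plan is to reduce $\sigma(A,B,C)$ to a sum of representation functions of the form $r_{A - B'}(\cdot)$ restricted to a set of size $|C|$, and then apply H\"older's inequality together with the definition of $d_4^+(A)$.

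First I would fix the optimal triple $(\sigma_1,\sigma_2,\sigma_3)$ achieving the supremum, and without loss of generality divide through by $\sigma_1$ to reduce to counting solutions of $a + \sigma_2' b + \sigma_3' c = 0$ with $(a,b,c) \in A \times B \times C$. Defining the rescaled sets $B' = -\sigma_2' B$ and $C' = -\sigma_3' C$, the equation becomes $a = b' + c'$ with $b' \in B'$, $c' \in C'$, and clearly $|B'| = |B|$, $|C'| = |C|$. The count is then
\[
\sigma(A,B,C) = \sum_{c' \in C'} \#\{(a,b') \in A \times B' : a - b' = c'\} = \sum_{c' \in C'} r_{A - B'}(c').
\]

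Next, I would apply H\"older's inequality to the sum on the right with exponents $4/3$ and $4$:
\[
\sum_{c' \in C'} r_{A - B'}(c') \leq |C'|^{3/4} \biggl( \sum_{c' \in C'} r_{A - B'}(c')^4 \biggr)^{\!1/4} \leq |C|^{3/4}\, E_4^+(A,B')^{1/4}.
\]
Finally, I would invoke Definition \ref{d4}, which gives $E_4^+(A,B') \leq d_4^+(A)\,|A|\,|B'|^3 = d_4^+(A)\,|A|\,|B|^3$, yielding the claimed bound
\[
\sigma(A,B,C) \leq |C|^{3/4}\bigl(d_4^+(A)\,|A|\,|B|^3\bigr)^{1/4}.
\]

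There is no real obstacle: the argument is essentially a two-line H\"older estimate once one recognizes that $\sigma(A,B,C)$ is, after an affine change of variables, an $\ell^1$ sum of $r_{A - B'}$ over a set of size $|C|$. The only conceptual point worth emphasizing is the choice of exponents: H\"older with $(4/3, 4)$ is exactly what couples the $|C|$-factor to the fourth moment $E_4^+$, which is precisely what $d_4^+(A)$ controls. This is why the fourth moment energy is the natural object here, whereas the earlier Konyagin--Shkredov bound used Cauchy--Schwarz and consequently only saw the second moment $E^+(B), E^+(C)$.
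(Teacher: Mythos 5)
Your proof is correct and is essentially identical to the paper's argument: both rewrite the count as an $\ell^1$ sum of $r_{A-B'}$ over a set of size $|C|$ (the paper keeps the dilation $\sigma_2/\sigma_1$ inside the counting function rather than renaming the sets), apply H\"older with exponents $(4/3,4)$, and finish with Definition \ref{d4}. No gaps to report.
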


\begin{proof}
The proof is similar to what appears in \cite{LR} for third order energy. Fix $\sigma_1 , \sigma_2 , \sigma_3 \neq 0$. Then by H\"older's inequality, we obtain

\begin{align*}
\#\{(a, b , c) \in A \times B \times C : &\sigma_1a + \sigma_2 b + \sigma_3 c = 0\}  = \sum_{c \in C} \#\{(a, b ) \in A \times B  : \sigma_1a + \sigma_2 b = -\sigma_3 c \} \\
& \leq |C|^{3/4} \left( \sum_{c \in C} \#\{(a, b ) \in A \times B  : \sigma_1a + \sigma_2 b = -\sigma_3 c \}^4 \right)^{1/4} \\
& \leq |C|^{3/4} \left( \sum_{x} \#\{(a, b ) \in A \times B  : a + \sigma_2 / \sigma_1 b = x  \}^4 \right)^{1/4} \\
& \leq |C|^{3/4} \left( d_4^+(A) |A| |B|^3  \right)^{1/4} .\\
\end{align*}
The proposition follows as $\sigma_1 , \sigma_2 , \sigma_3$ were arbitrary.
\end{proof}

The next lemma is a fourth order energy analog of \cite[Theorem 12]{RSS}. 

\begin{prop}  \label{second} Let $A \subset \mathbb{R}$ be finite. Then there exists $X , Y \subset A$ such that
\begin{itemize}
\item[(i)] $X \cup Y = A$,
\item[(ii)] $|X| , |Y| \geq |A| /2$,
\item[(iii)] $d_4^+(X) E^{\times}(Y) \lesssim |A|^3$.
\end{itemize}
\end{prop}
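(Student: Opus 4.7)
The plan is to adapt the proof of Theorem \ref{main} to the fourth-order setting: I will establish a fourth-order analog of Lemma \ref{lem1} relating $E^{\times}(T)$ to $d_4^+$ of a suitable subset, together with an $\ell^4$ union lemma for $d_4^+$ in direct analogy with Lemma \ref{lem2}, and then run the same iterative peeling argument as in Theorem \ref{main}.

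The key lemma I target is: for every finite nonempty $T \subset \mathbb{R}$, there exists a nonempty $A' \subset T$ with $E^{\times}(T)\, d_4^+(A') \lesssim |A'|^3$ and $|A'| \gtrsim E^{\times}(T)/|T|^2$. The proof would follow Lemma \ref{lem1} step for step. First, dyadically decompose $E^{\times}(T) = \sum_x r_{T/T}(x)^2 \sim |P|\Delta^2$ with $P = \{x : r_{T/T}(x) \sim \Delta\}$. Next, a pigeonhole on the identity $\sum_{x \in P} r_{T/T}(x) = \sum_{a \in T} r_{T/P^{-1}}(a) \sim |P|\Delta$ yields $A' = \{a \in T : r_{T/P^{-1}}(a) \sim q\}$ with $|A'|q \sim |P|\Delta$. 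Since $A' \subset \{x : r_{Q/R}(x) \geq q\}$ with $Q = T$ and $R = P^{-1}$, the incidence inequality \eqref{key}, together with the elementary $d_4^+ \leq d^+$, controls $d_4^+(A')$. Careful bookkeeping, using $|A'|q\Delta \sim E^{\times}(T)$ and $q \leq |T|$, is intended to convert this raw estimate into $E^{\times}(T) d_4^+(A') \lesssim |A'|^3$; the size bound then arises from $|A'| \geq |P|\Delta/|T| \geq E^{\times}(T)/|T|^2$.

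The union lemma
$$ d_4^+\!\Bigl(\bigcup_{j=1}^K A_j\Bigr) \leq \Bigl|\bigcup_{j=1}^K A_j\Bigr|^{-1} \Bigl(\sum_{j=1}^K d_4^+(A_j)^{1/4}|A_j|^{1/4}\Bigr)^{4} $$
for pairwise disjoint finite $A_j \subset \mathbb{R}$ follows from the triangle inequality in $\ell^4(\mathbb{Z})$ applied to $E_4^+(\bigcup_j A_j, B)^{1/4} \leq \sum_j E_4^+(A_j, B)^{1/4}$, precisely as in Lemma \ref{lem2}. I then iterate the key lemma exactly as in the proof of Theorem \ref{main}: starting from $A_0 = \emptyset$, at stage $j$ set $T_j = A \setminus (A_0 \cup \cdots \cup A_{j-1})$ and take $A_j \subset T_j$ from the key lemma, halting once $|A_1 \cup \cdots \cup A_K| \geq |A|/2$. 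Putting $X = \bigcup_{j=1}^{K} A_j$ and $Y = T_K$ ensures $|X|, |Y| \geq |A|/2$. Since $Y \subset T_j$ gives $E^{\times}(Y) \leq E^{\times}(T_j)$, the key lemma produces $d_4^+(A_j) \lesssim |A_j|^3/E^{\times}(Y)$; therefore $\sum_j d_4^+(A_j)^{1/4}|A_j|^{1/4} \lesssim |X|\, E^{\times}(Y)^{-1/4}$, and the union lemma closes the argument with $d_4^+(X) \lesssim |X|^3/E^{\times}(Y) \leq |A|^3/E^{\times}(Y)$, which is the claim.

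The main obstacle is securing the sharp form of the key lemma, specifically the right-hand side $|A'|^3$ rather than the looser $|T|^2|A'|^2$ produced by a direct application of \eqref{key}. A naive substitution $d_4^+ \leq d^+$ into the proof of Lemma \ref{lem1} yields only $E^{\times}(T) d_4^+(A') \lesssim |T|^2|A'|^2/\Delta$, and even after exploiting the coupling $\Delta \gtrsim E^{\times}(T)/(|T||A'|)$ one obtains no better than $E^{\times}(T)^2 d_4^+(A') \lesssim |T|^3|A'|^3$; feeding this into the iteration above gives $d_4^+(X) E^{\times}(Y) \lesssim |A|^4$ rather than $|A|^3$. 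Closing this gap is exactly where a genuine fourth-order incidence estimate --- in the spirit of the \cite{MRS1} refinement of Szemer\'{e}di--Trotter referenced in Section 5 --- must enter in place of the third-moment bound \eqref{key}, and this is where the real technical work of the proof lies.
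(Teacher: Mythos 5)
Your outer skeleton (an $\ell^4$ union lemma for $d_4^+$ in the style of Lemma \ref{lem2}, plus the peeling iteration of Theorem \ref{main}) is sound, but the proposal does not prove the proposition: everything hinges on your ``key lemma'' $E^{\times}(T)\, d_4^+(A') \lesssim |A'|^3$, which you do not establish, and as you yourself compute, the tools you actually deploy (\eqref{key} together with $d_4^+ \leq d^+$) only yield a bound that feeds through the iteration to $d_4^+(X)E^{\times}(Y) \lesssim |A|^4$. That lost factor of $|A|$ is precisely the content of the proposition, so the final paragraph of your proposal is an admission that the proof is incomplete. Moreover it is not clear that your key lemma, in the direction you state it (multiplicative energy of the \emph{large} set controlled by $d_4^+$ of a \emph{subset}), can be extracted from Szemer\'edi--Trotter at all, since that theorem is intrinsically a third-moment statement.

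The paper closes exactly this gap by transposing the key lemma rather than trying to force a fourth-moment version of \eqref{key}. Lemma \ref{lem11} states: there is a nonempty $A' \subset A$ with $E^{\times}(A')\, d_4^+(A) \lesssim |A'|^4/|A|$, i.e.\ the fourth-moment quantity is evaluated on the \emph{ambient} set and the multiplicative energy on the popular \emph{subset}. Its proof picks $B$ nearly optimal in Definition \ref{d4}, takes the dyadic level set $P$ of popular differences with $E_4^+(A,B) \sim |P|\Delta^4$, defines $A' = \{a : r_{B+P}(a) \sim q\}$, and then lifts each multiplicative quadruple $a_1/a_2 = a_3/a_4$ in $A'$ to $\gtrsim q^4$ solutions of
$$\frac{p+b}{q+c} = \frac{p'+b'}{q'+c'}, \qquad p,q,p',q' \in P,\ b,c,b',c' \in B,$$
whose total number is $\lesssim |P|^3|B|^3$ by \cite[Lemma 2.5]{MRS1}; this solution-count is the genuine fourth-order incidence input you correctly sensed was needed but never supplied. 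Because the lemma is transposed, the iteration is run with the roles of your $X$ and $Y$ swapped: the leftover set $X = A \setminus (A_1 \cup \cdots \cup A_{K-1})$ inherits the $d_4^+$ bound (its $d_4^+$ is comparable to that of each intermediate ambient set), while $Y = A_1 \cup \cdots \cup A_K$ is controlled by a union lemma for $E^{\times}$ (Lemma \ref{lem3}, via the $\ell^2$ triangle inequality and $E^{\times}(A_j,A_k)^2 \leq E^{\times}(A_j)E^{\times}(A_k)$), not by a union lemma for $d_4^+$. So if you want to complete your argument, the realistic route is to prove the transposed lemma with the \cite{MRS1} solution count and rearrange the iteration accordingly, rather than to seek a fourth-moment strengthening of \eqref{key}.
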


Proposition \ref{second} is a ``few sums, many products" theorem. Indeed, if $d_4^+(X) \gtrsim |A|$, then $E^{\times}(Y) \lesssim |A|^2$ and so by Cauchy--Schwarz, $$|AA| \geq |YY| \gtrsim |Y|^2.$$ We begin the proof of Proposition \ref{second} with the following lemma. We mention that there is a large overlap of the proof of Theorem \ref{main} and Proposition \ref{second}, which are both decomposition results.

\begin{lemma}\label{lem11}Let $A \subset \mathbb{R}$ be finite. Then there exists a nonempty $A' \subset A$ such that
$$E^{\times}(A')   d_4^+(A)  \lesssim \frac{|A'|^4}{|A|} , \ \ \ |A'| \gtrsim d_4^{+}(A).$$
\end{lemma}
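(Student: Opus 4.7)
The plan is to follow the blueprint of Lemma~\ref{lem1}, swapping the roles of additive and multiplicative quantities and inserting the fourth--order quantity $d_4^+(A)$ where $d^\times(T)$ appeared. By Definition~\ref{d4}, there is a nonempty finite $B$ with $E_4^+(A,B) \sim d_4^+(A)\,|A|\,|B|^3$. A dyadic pigeonhole on $\sum_x r_{A-B}(x)^4$ produces $\Delta \ge 1$ and the level set $P = \{x : \Delta \le r_{A-B}(x) < 2\Delta\}$ with $|P|\Delta^4 \sim E_4^+(A,B)$. A second dyadic pigeonhole on the identity
$$\sum_{x \in P} r_{A-B}(x) \;=\; \sum_{a \in A} r_{B+P}(a) \;\sim\; |P|\Delta$$
produces $q \ge 1$ and a nonempty candidate set $A' = \{a \in A : q \le r_{B+P}(a) < 2q\}$ with $|A'|q \sim |P|\Delta$.

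Next, I would exploit that $A'$ lies entirely in the level set $\{a : r_{B-(-P)}(a) \ge q\}$. By Definition~\ref{Dstar}, after verifying the admissibility condition $q \lesssim |Q|^{1/2}|R||A'|^{-1/2}$ with $Q,R$ the larger and smaller of $B,-P$ (which reduces via $|A'|q \sim |P|\Delta$ to $\Delta q \lesssim |B||P|$ and follows from $\Delta \le |B|$ and $q \le |P|$), we obtain $D^+(A') \lesssim |B|^2|P|^2/(q^3|A'|)$, and the multiplicative analogue of~\eqref{key} then yields $d^\times(A') \lesssim D^+(A')$. The lower bound $|A'| \gtrsim d_4^+(A)$ should fall out exactly as at the end of Lemma~\ref{lem1}: rearranging $|A'| \sim |P|\Delta/q$ with the bound $q \le |B|$ and the identity $d_4^+(A) \sim |P|\Delta^4/(|A||B|^3)$ gives the required comparison after one line of algebra.

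The most delicate step is converting the bound on $d^\times(A')$ into the specific $E^\times(A')$ estimate demanded by the lemma. The first attempt would be to dyadically sum the multiplicative level-set characterization $\#\{\lambda : r_{A'/A'}(\lambda) \ge \tau\} \lesssim d^\times(A')|A'|^3\tau^{-3}$ (the analogue of Remark~\ref{op}) to obtain $E^\times(A') \lesssim d^\times(A')|A'|^3$, and then substitute the bound for $d^\times(A')$ together with $|A'|q \sim |P|\Delta$ and $d_4^+(A) \sim |P|\Delta^4/(|A||B|^3)$, hoping that the resulting inequality collapses to the desired $E^\times(A')d_4^+(A) \lesssim |A'|^4/|A|$ after absorbing logarithms into $\lesssim$. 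I expect this to be the main obstacle: sanity checks (arithmetic progressions, where the target is tight) suggest the crude level-set sum is short of the goal by a polynomial factor, so the honest argument is likely to proceed via a direct Szemer\'edi--Trotter count on the point set $B\times B$ with the family of lines $\{(b_1,b_2) : b_1 - \lambda b_2 = \lambda p_2 - p_1\}$ parametrized by $(\lambda,p_1,p_2)$ in a popular multiplicative level set of $r_{A'/A'}$ and $P\times P$. Each valid incidence encodes a quadruple with $b_1+p_1,\,b_2+p_2 \in A'$ and $(b_1+p_1)/(b_2+p_2)=\lambda$, and the multiplicity $q^2$ provided by the dyadic choice of $A'$ converts the Szemer\'edi--Trotter bound into the required second--moment estimate of shape $|B|^3|P|^3/q^4$ for $E^\times(A')$.
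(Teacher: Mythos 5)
Your setup coincides with the paper's: choose a near--optimal $B$ in Definition~\ref{d4}, pigeonhole to get $P,\Delta$ with $|P|\Delta^4\sim E_4^+(A,B)$, pigeonhole again to get $A',q$ with $|A'|q\sim|P|\Delta$, and the lower bound $|A'|\gtrsim d_4^+(A)$ does follow by the one line of algebra you indicate. You are also right to distrust your first route: passing through $D^+(A')$ and the multiplicative analogue of \eqref{key} only yields third--moment information about $A'$, and the crude conversion $E^\times(A')\lesssim d^\times(A')|A'|^3$ leaves you with $E^\times(A')\,d_4^+(A)\lesssim |A'|^5\Delta/(|A||B|)$, short of the target $|A'|^4/|A|$ by a factor of order $|A'|\Delta/|B|$, which can be polynomially large.

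The genuine gap is that your fallback does not close this either. What the lemma requires is the second--moment bound $q^4E^\times(A')\lesssim|P|^3|B|^3$, and the paper obtains it by lifting each multiplicative \emph{quadruple} $a_1/a_2=a_3/a_4$ in $A'$ (with multiplicity $q^4$, writing $a_j=(a_j-b_j)+b_j$, $a_j-b_j\in P$) to a solution of the eight--variable equation \eqref{sols}, and then invoking the nontrivial count, from the claim inside the proof of \cite[Lemma 2.5]{MRS1}, that \eqref{sols} has $\lesssim|P|^3|B|^3$ solutions. That imported count is the essential ingredient and is not recoverable from one application of Theorem~\ref{ST}. Your incidence scheme (points $B\times B$, lines indexed by $(\lambda,p_1,p_2)$ with $\lambda$ in a popular level set $S_\tau$ of $r_{A'/A'}$, multiplicity $q^2$ from lifting \emph{pairs}) gives at best $|S_\tau|\tau q^2\lesssim |B|^{4/3}\bigl(|S_\tau||P|^2\bigr)^{2/3}+|B|^2+|S_\tau||P|^2$, hence $|S_\tau|\lesssim|B|^4|P|^4\tau^{-3}q^{-6}$ and $E^\times(A')\sim|S_\tau|\tau^2\lesssim|B|^4|P|^4\tau^{-1}q^{-6}$. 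This is again a third--moment estimate (Szemer\'edi--Trotter is naturally a third--moment tool, as the paper stresses), and for small $\tau$ it is weaker than $|B|^3|P|^3q^{-4}$ by the factor $|B||P|/(\tau q^2)\geq1$; substituting it into the rest of your computation reproduces essentially the same polynomial loss as your first route. There is also a technical defect: distinct $(p_1,p_2)\in P\times P$ can produce the same line $x-\lambda y=\lambda p_2-p_1$, so your line family must be handled with weights, which a bare application of Theorem~\ref{ST} does not allow. To repair the argument, keep your setup but replace the final step by the paper's quadruple--lifting into \eqref{sols} together with the $|P|^3|B|^3$ bound of \cite{MRS1}.
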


Note that if $A'$ were equal to $A$ then Proposition \ref{second} would immediately follow, but this is too strong to hope for.

\begin{proof}
Let $B \subset \mathbb{R}$ be finite and nonempty. By a dyadic decomposition, there is a $\Delta \geq 1$ such that $$E_4^+(A,B) \sim |P| \Delta^4, \ \ \ P = \{x : \Delta \leq r_{A-B}(x) \leq 2 \Delta\}.$$ We double count the number of solutions to \begin{equation}\label{sols} \frac{p+b}{q + c} = \frac{p' + b'}{q' + c'} , \ \ \ p , q , p' , q'  \in P  , \ \  b , c , b' , c' \in B.\end{equation} By a claim in the proof of \cite[Lemma 2.5]{MRS1}, one has that the number of solutions to \eqref{sols} is $\lesssim |P|^3 |B|^3$.

By a dyadic decomposition, there is a $q \geq 1$ such that $$|A'|q \sim \sum_{a \in A} r_{P+B}(a) \sim \sum_{x \in P} r_{A-B}(x) \sim \Delta |P|, \ \ A' = \{a' \in A : q \leq r_{B+P}(a')   < 2q\}.$$  
Given $$\frac{a_1}{a_2} = \frac{a_3}{a_4} , \ \ \ \ a_1 , a_2 , a_3 , a_4 \in A',$$ we may create a solution to \eqref{sols}, via $$\frac{a_1 - b_1 + b_1}{a_2 - b_2 + b_2} = \frac{a_3-b_3 + b_3}{a_4-b_4 + b_4} , \ \ \ \ b_1 , b_2 , b_3 , b_4 \in B, $$ as long as $a_j - b_j \in P$ for all $j$. Since each $a_j \in A'$, there are at least $q$ such choices for each $b_j$. Thus $q^4 E^{\times}(A')$ is $\lesssim$ the number of solutions to \eqref{sols}, and so
 $$E^{\times}(A') \sim \frac{|B|^3 |P|^3}{q^4} \sim \frac{|A'|^4|B|^3 }{|P| \Delta^4}   \sim \frac{|A'|^4 |B|^3 }{E_4^+(A,B)} .$$ 
Finally, using $\Delta \leq |B|$ and $q \leq |A|$, we have $$|A'| \gtrsim |P| \Delta q^{-1} \gtrsim E_4^+(A,B) |A|^{-1} |B|^{-3}.$$
The lemma now follows from Definition \ref{d4} of $d_4^+(A)$ since $B$ is arbitrary. \end{proof}
We also need the following lemma describing how $E^{\times}(A)$ behaves with respect to unions. The lemma will require the following application of Cauchy--Schwarz \begin{equation}\label{cs} E^{\times}(A,B)^2 \leq E^{\times}(A) E^{\times}(B). \end{equation}

\begin{lemma}\label{lem3}
Let $A_1 , \ldots , A_K \subset \mathbb{R}$ be finite and disjoint. Then $$E^{\times}(\bigcup_{j=1}^K A_j) \leq  \left( \sum_{j=1}^K E^{\times}(A_j)^{1/4}\right)^4$$

\end{lemma}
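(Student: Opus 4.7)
The plan is to combine two ingredients: the triangle inequality in $\ell^2(\mathbb{Z})$ (applied to the representation functions $r_{A_i/A_j}$) and the Cauchy--Schwarz estimate \eqref{cs}. The disjointness of the $A_j$ is used only to split the representation function of the union cleanly; the bilinear energies $E^{\times}(A_i,A_j)$ will then be controlled by the single-set energies via \eqref{cs}.

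First, since the $A_j$ are disjoint, for every $x \neq 0$ we have the identity
\[
r_{\cup_j A_j/\cup_j A_j}(x) \;=\; \sum_{i,j=1}^K r_{A_i/A_j}(x).
\]
Taking $\ell^2$--norms in $x$ and applying the triangle inequality yields
\[
E^{\times}\!\Big(\bigcup_{j=1}^K A_j\Big)^{1/2}
\;=\; \Big\| \sum_{i,j} r_{A_i/A_j} \Big\|_2
\;\leq\; \sum_{i,j=1}^K \|r_{A_i/A_j}\|_2
\;=\; \sum_{i,j=1}^K E^{\times}(A_i,A_j)^{1/2}.
\]

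Second, I would apply the Cauchy--Schwarz inequality \eqref{cs} to each summand to obtain
\[
E^{\times}(A_i,A_j)^{1/2} \;\leq\; E^{\times}(A_i)^{1/4}\, E^{\times}(A_j)^{1/4},
\]
so that the right-hand side factors as
\[
\sum_{i,j=1}^K E^{\times}(A_i)^{1/4} E^{\times}(A_j)^{1/4}
\;=\; \Big(\sum_{j=1}^K E^{\times}(A_j)^{1/4}\Big)^{\!2}.
\]
Squaring both sides of the resulting bound on $E^{\times}(\cup_j A_j)^{1/2}$ gives the desired inequality.

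There is no real obstacle: the argument is essentially the Minkowski inequality for the $\ell^2$-norm of a sum of nonnegative functions, followed by a standard application of \eqref{cs}. The only point worth double--checking is that the identity $r_{\cup_j A_j/\cup_j A_j}(x)=\sum_{i,j} r_{A_i/A_j}(x)$ holds exactly (which requires the $A_j$ to be pairwise disjoint, as assumed), so that no extra constants are introduced when splitting the representation function. This mirrors the proof of Lemma~\ref{lem2}, where the same triangle-inequality strategy was used for third-order energies.
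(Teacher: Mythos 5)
Your proof is correct and follows essentially the same route as the paper: split $r_{\cup A_j/\cup A_k}$ over the partition, apply the triangle inequality in $\ell^2(\mathbb{Z})$, and then use \eqref{cs} to factor the bilinear energies. (Note the splitting identity in fact holds for every $x$, including $x=0$, since disjointness partitions all pairs $(a,b)$ with $a=xb$, so your restriction to $x\neq 0$ is unnecessary but harmless.)
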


\begin{proof}
By the triangle inequality in $\ell^2(\mathbb{Z})$, we have \begin{align*}E^{\times} (\bigcup_{j=1}^K A_j )^{1/2} & = \left(\sum_x r_{\bigcup_{j=1}^K A_j / \bigcup_{k=1}^K A_k }(x) ^2\right)^{1/2} =  \left(\sum_x  \left( \sum_{j, k=1}^K r_{ A_j / A_k}(x) \right)^2\right)^{1/2}  \\ & \leq \sum_{j,k=1}^K\left(  \sum_x r_{A_j /A_k}(x)^2 \right)^{1/2} = \sum_{j,k=1}^K E^{\times}(A_j , A_k)^{1/2} \end{align*} Now we apply \eqref{cs} to obtain $$\sum_{j,k=1}^K E^{\times}(A_j , A_k)^{1/2}  \leq \sum_{j,k=1}^K E^{\times}(A_j)^{1/4} E^{\times} ( A_k)^{1/4} = \left( \sum_{j=1}^K E^{\times}(A_j )^{1/4} \right)^2.$$ Combining these two inequalities completes the proof. 
\end{proof}
We now iterate Lemma \ref{lem11} and prove Proposition \ref{second}.

\begin{proof}[Proof of Proposition \ref{second}] Set $A_0 = \emptyset$ and suppose that $A_0 , A_1 , \ldots , A_{j-1}$ have been defined.  We define $A_j$ via Lemma \ref{lem11} as a nonempty set $A_j \subset A \setminus (A_0 \cup \ldots \cup A_{j-1})$ such that $$d_4^+(A\setminus (A_0 \cup \cdots \cup A_{j-1})) E^{\times}(A_j) \lesssim |A_j|^4 |A \setminus (A_0 \cup \ldots \cup A_{j-1})|^{-1}.$$ We continue this process until 
$$|A_1 \cup \ldots \cup A_K| \geq |A|/2.$$
This process must terminate for some $K \leq |A|/2$ as the $A_j$ are nonempty and disjoint. 

Set $X = A \setminus (A_1 \cup \cdots \cup A_{K-1})$ and $Y = A_1 \cup \cdots \cup A_K$. It is clear that $|Y| \geq |A|/2$ and $|X| \geq |A|/2$, otherwise the process would have stopped at step $K-1$. By Lemma \ref{lem11}, for $1 \leq j \leq K$, 
$$d_4^+(X) E^{\times}(A_j) \leq d_4^+(A \setminus (A_1 \cup \cdots \cup A_{j-1})) E^{\times}(A_j)  \lesssim |A_j|^4 |A \setminus ( A_1 \cup \cdots \cup A_{j-1})|^{-1} \lesssim |A_j|^4 |A|^{-1}.$$ Combining this with Lemma \ref{lem3},  we obtain
\begin{align*} E^{\times}(Y) &= E^{\times}(\bigcup_{j=1}^K A_j)  \leq \left( \sum_{j=1}^K E^{\times}(A_j)^{1/4} \right)^4 \\ & \lesssim \frac{1}{d_4^+(X)|A|}\left( \sum_{j=1}^K |A_j| \right)^4  \leq |Y|^3 d_4^+(X)^{-1}. \end{align*} Proposition \ref{second} follows from $|Y| \leq |A|$.

\end{proof}

We now give the proof of Theorem \ref{sumprod}, which is identical to that in \cite{KS2} with some minor changes to utilize Proposition \ref{first} and Proposition \ref{second}. 

\begin{proof}[Proof of Theorem \ref{sumprod}] Suppose that $|A+A| , |AA| \leq r|A|^{4/3}$. Thus our goal is to show $$r \gtrsim |A|^{5/5277}.$$ 
Note that by Cauchy--Schwarz, $|AA| \leq r |A|^{4/3}$ implies \begin{equation}\label{ener} E^{\times}(A) \geq |A|^{8/3} r^{-1}.\end{equation}
 By a dyadic decomposition, there exists a $$t \geq E^{\times}(A) |A|^{-2} \geq |A|^{2/3} r^{-1},$$ such that $$E^{\times}(A) \sim |S_t| t^2 , \ \ S_t = \{\lambda : r_{A/A}(\lambda) \sim t \}.$$ 
 For $\lambda \in A/A$, we set $$A_{\lambda} = A \cap \lambda A.$$ Thus $|A_{\lambda} | = r_{A/A}(\lambda )$.
By Konyagin--Shkredov clustering   \cite{KS2} (see also Adam Sheffer's blog \cite[Equation 9]{Shef} or my blog \cite{GS}), which is a refinement of Solymosi's \cite{So} argument, we have  \begin{equation}\label{cluster}|A+A|^2 \gtrsim \frac{|S_t|}{M} \left( M^2 t^2 - M^4 \max_{\lambda_1, \lambda_2 , \lambda_3 \in S_t} \sigma(A_{\lambda_1} , A_{\lambda_2} , A_{\lambda_3}) \right),\end{equation} as long as $2 \leq M \leq |S_t| /2$. We apply Proposition \ref{first} to obtain
$$|A+A|^2 \gtrsim \frac{|S_t|}{M} \left( M^2 t^2 - M^4t^{7/4} \max_{\lambda \in S_t} d_4^+(A_{\lambda})^{1/4} \right).$$
Set $$M^2 := \frac{t^{1/4}}{2 \max_{\lambda \in S_t} d_4^+(A_{\lambda})^{1/4}}.$$
Now, we have $M \leq |S_t| / 2$, since otherwise, using $d^+(A_{\lambda}) \geq1$, $$|A|^{4 - 4/3} r^{-1} \leq E^{\times}(A) \sim |S_t| t^2 \lesssim M t^2  \lesssim t^{17/8} \leq |A|^{17/8} \leq |A|^{2 + 1/3},$$ and so $r \gtrsim |A|^{1/3}$. Also, if $M \geq 2$, then we may apply \eqref{cluster} to obtain $$ME^{\times}(A) \lesssim |A+A|^2,$$ which implies $M \lesssim r^3$. Note that Solymosi originally proved $E^{\times}(A) \lesssim |A+A|^2$. We can improve unless $M$ is very small.

Thus we just have to handle the hardest case: $M \lesssim r^{3}$, that is $$\frac{t}{r^{24}} \leq d_4^+(A).$$ By a technical trick in \cite{KS2}, this implies for all $\lambda \in S_t$ (as opposed to maximum in $\lambda$), that \begin{equation}\label{d4ineq} \frac{t}{r^{24}} \lesssim d_4^+(A_{\lambda}).\end{equation} Indeed, we may partition $S_t = S_t' \cup S_t''$ where $d_4^+(A_{\lambda'}) \leq d_4^+(A_{\lambda''})$ for $\lambda' \in S_t'$ and $\lambda'' \in S_t''$. Then we apply the above argument to $S_t'$ and see that \eqref{d4ineq} holds for all the elements in $S_t''$. Then we may replace $S_t$ with $S_t''$ at the loss of just a constant. Note that \eqref{d4ineq} implies that $d_4^+(A_{\lambda})$ is almost as large as possible and each $A_{\lambda}$ has a lot of additive structure. 

After passing to large subsets of $A_{\lambda}$ and applying Proposition \ref{second}, we have $$E^{\times}(A_{\lambda}) \lesssim t^2 r^{24}, \  \ \ \lambda \in S_t,$$ and by Cauchy--Schwarz we find $$\frac{t^2}{r^{24}} \lesssim |A_{\lambda} A_{\lambda}|, \ \ \ \lambda \in S_t.$$ Thus each $A_{\lambda}$ has almost no multiplicative structure. 
By  Katz--Koester inclusion \cite{KK}, we have $A_{\lambda} A_{\lambda} \subset AA \cap \lambda AA$ and so $$S_t \subset \{x : r_{AA/ AA}(x) \gtrsim t^2 r^{-24} \}.$$ One issue is that $S_t$ is a subset of $AA^{-1}$ and not $A$. By a popularity argument, since $$|S_t| t \leq \sum_{\lambda \in S_t} |A \cap \lambda A| = \sum_{a \in A} |A \cap a S_t|,$$ there is an $a \in A$ such that $$A' = a S_t \cap A \subset \{ x : r_{a AA / AA}(x) \gtrsim t^2 r^{-24} \}, \ \ \ |A'| \gtrsim |S_t| t |A|^{-1}.$$ Thus by \eqref{key}, we find $$d^{+}(A') \leq D^{\times}(A') \lesssim \frac{|AA|^4 r^{72}}{|A'| t^6}.$$ We now apply the first statement of Theorem \ref{sum} (To prove Theorem \ref{random}, one should apply the second statement of Theorem \ref{sum} in place of the first) and first use that $ |A'| \gtrsim |S_t| t |A|^{-1}$ and $|S_t|t^2 \sim E^{\times}(A)$ to obtain
 $$|A+A|^{37} \geq |A'+ A'|^{37} \gtrsim \frac{E^{\times}(A)^{79} t^{47}}{|AA|^{84} |A|^{79}r^{1512}}.$$ We now apply $t \gtrsim E^{\times}(A) |A|^{-2}$ and then \eqref{ener} to find 
$$|A|^{331} \lesssim r^{1512} |A+A|^{37} |AA|^{210}.$$
 Theorem \ref{sumprod} then follows from $|AA| , |A+A| \leq r|A|^{4/3}$ and simplification. 

\end{proof}

\end{document}